\newtheorem{Lemma}             {Lemma}
\newtheorem{Corollary}  [Lemma]{Corollary}
\newtheorem{Proposition}[Lemma]{Proposition}
\newtheorem{Theorem}    [Lemma]{Theorem}
\newcommand{\ov}{\overline}
\newcommand{\op}{\operatorname}
\newcommand{\Mod}[1]{\ (\mathrm{mod}\ #1)}
\title[F-S indicators, cyclic defect]{Frobenius-Schur indicators of characters in blocks with cyclic defect}
\author{John C. Murray}
\address{Department of Mathematics and Statistics\\National University of Ireland Maynooth\\IRELAND}
\email{John.Murray@maths.nuim.ie}
\date{\today}
\subjclass[2000]{20C20}
\begin{document}

\begin{abstract}Let $p$ be an odd prime and let $B$ be a $p$-block of a finite group which has cyclic defect groups. We show that all exceptional characters in $B$ have the same Frobenius-Schur indicators. Moreover the common indicator can be computed, using the canonical character of $B$. We also investigate the Frobenius-Schur indicators of the non-exceptional characters in $B$.

For a finite group which has cyclic Sylow $p$-subgroups, we show that the number of irreducible characters with Frobenius-Schur indicator $-1$ is greater than or equal to the number of conjugacy classes of weakly real $p$-elements in $G$. 
\end{abstract}
\maketitle
\thispagestyle{empty}

\section{Introduction and preliminary results}

The Frobenius-Schur (F-S) indicator of an ordinary character $\chi$ of a finite group $G$ is
$$
\epsilon(\chi):=\frac{1}{|G|}\sum_{g\in G}\chi(g^2).
$$
If $\chi$ is irreducible then $\epsilon(\chi)=0,\pm1$. Moreover $\epsilon(\chi)\ne0$ if and only if $\chi$ is real-valued.

R. Brauer showed how to partition the irreducible characters of $G$ into $p$-blocks, for each prime $p$. Each $p$-block has an associated defect group, which is a $p$-subgroup of $G$, unique up to $G$-conjugacy, which determines much of the structure of the block. If the defect group is trivial, the block contains a unique irreducible character. In the next most complicated case, E. Dade \cite{D} determined the structure of a block which has a cyclic defect group and defined the Brauer tree of the block.

Recall that a $p$-block is said to be real if it contains the complex conjugates of its characters. We wish to determine the F-S indicators of the irreducible characters in a real $p$-block which has a cyclic defect group. In \cite[Theorem 1.6]{M2} we dealt with the case $p=2$; there are six possible indicator patterns, and the {\em extended} defect group of the block determines which occurs. In this paper we consider the case $p\ne2$.

R. Gow showed \cite[5.1]{G} that a real $p$-block has a real irreducible character, if $p=2$. This is false for $p\ne2$, as was first noticed by H. Blau in the early 1980's, in response to a question posed by Gow. His example was for $p=5$ and $G=6.S_6$ ({\em Atlas} notation). G. Navarro has recently found a solvable example with $p=3$ and $G=\op{SmallGroup}(144,131)$ ({\em GAP} notation). We give examples for blocks with cyclic defect below.

Now let $B$ be a real $p$-block which has a cyclic defect group $D$. The inertial index of $B$ is a certain divisor $e$ of $p-1$. Dade showed that $B$ has $e$ irreducible Brauer characters and $e+\frac{|D|-1}{e}$ ordinary irreducible characters. The latter he divided into $\frac{|D|-1}{e}$ exceptional characters and $e$ non-exceptional characters.

Suppose that $\frac{|D|-1}{e}=1$ (which can only occur when $|D|=p$). Then the choice of exceptional character is arbitrary, and the convention in \cite{F} is to regard $B$ as having no exceptional characters. However, we will see that in this event $B$ has real irreducible characters, all of which have the same F-S indicators. So our convention is to assume that $B$ has a real exceptional character.

The Brauer tree of $B$ is a planar graph which describes the decomposition matrix of $B$. There is one exceptional vertex, representating all the exceptional characters, and one vertex for each of the non-exceptional characters. Two vertices are connected by an edge if their characters share a modular constituent.

J. Green \cite{Gr} showed that all real objects in the Brauer tree lie on a line segment, now called the real-stem of $B$. The exceptional vertex belongs to the real-stem (see Lemma \ref{L:real_stem} below). So it divides the real non-exceptional vertices into two, possibly empty, subsets. We find it convenient to refer to the corresponding real non-exceptional characters as being on the left or the right of the exceptional vertex. Here is our main theorem:

\begin{Theorem}\label{T:main}
Let $p$ be an odd prime and let $B$ be a real $p$-block which has a cyclic defect group. Then
\begin{itemize}
 \item[(i)] All exceptional characters in $B$ have the same F-S indicators.
 \item[(ii)] On each side of the exceptional vertex, the real non-exceptional characters have the same F-S indicators.
 \item[(iii)] If $B$ has a real exceptional character then all real irreducible characters in $B$ have the same F-S indicators.
 \item[(iv)] Suppose that $B$ has no real exceptional characters, and that there are an odd number of non-exceptional vertices on each side of the exceptional vertex. Then the real non-exceptional characters have F-S indicator $+1$ on one side of the exceptional vertex and $-1$ on the the other side.
\end{itemize}
\end{Theorem}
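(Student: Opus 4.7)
The plan is to prove the four assertions in sequence, with (iii) and (iv) following from (i), (ii), and additional standard arguments.

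For (i), I would invoke a Galois-conjugacy argument. Dade's construction parametrizes the exceptional characters by the non-trivial characters of a cyclic quotient of the defect group $D$, and these are permuted transitively by an appropriate subgroup of $\op{Gal}(\mathbb{Q}(\zeta_{|D|})/\mathbb{Q})$, twisting the canonical character of $B$. Because $\epsilon(\chi)\in\{-1,0,1\}\subset\mathbb{Q}$, the indicator is fixed by every field automorphism, so all members of a Galois orbit share a common value. Hence all exceptional characters in $B$ have the same F-S indicator.

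For (ii), the technical core, let $\chi_v$ and $\chi_w$ be adjacent non-exceptional characters on the real stem, both on the same side of the exceptional vertex. By Dade's theorem, their values differ on $p$-regular elements by $\pm\phi_e$, where $\phi_e$ is the Brauer character of the shared edge, while on $p$-singular elements with a given $p$-part their generalized decomposition numbers are controlled by the canonical character of $B$. Writing
$$
\epsilon(\chi_v)-\epsilon(\chi_w)=\frac{1}{|G|}\sum_{x\in G}\nu(x)\bigl(\chi_v(x)-\chi_w(x)\bigr),
$$
where $\nu(x)=\#\{y\in G:y^2=x\}$, and splitting by the order of the $p$-part of $x$, I would show the $p$-regular contribution vanishes because the Brauer character of a single edge on the same side of the exceptional vertex is not self-contragredient in the pertinent sense, while the $p$-singular contributions telescope via Dade's explicit formulas for character values on $p$-sections.

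For (iii), assuming there is a real exceptional character, Lemma~\ref{L:real_stem} puts the exceptional vertex on the real stem. I would then re-run the comparison of (ii) across each edge incident to the exceptional vertex, treating one representative exceptional character exactly as in the non-exceptional case. Together with (i) and (ii), this forces the common exceptional indicator to agree with each side's non-exceptional indicator, so all real irreducible characters in $B$ share the single value.

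For (iv), I would apply a parity/counting argument on the block projection of the Schur indicator character $\nu$, namely $\sum_{\chi\in\op{Irr}(B)}\epsilon(\chi)\chi$. With no real exceptional characters, only the real non-exceptional characters contribute, grouped as $\epsilon_L\sum_{v\text{ left}}\chi_v+\epsilon_R\sum_{v\text{ right}}\chi_v$ (using (ii)). Evaluating this projection on a suitable class (for example, pairing against the canonical character, or inspecting it on the identity using Dade's degree formulas) and using that each side has an odd number of non-exceptional vertices produces a nontrivial constraint forcing $\epsilon_L=-\epsilon_R$; the assignment of which side gets $+1$ is then read off from the position of the exceptional vertex on the real stem. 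The main obstacle will be in part (ii): obtaining a clean cancellation in the F-S indicator difference requires careful bookkeeping of the values of $\chi_v-\chi_w$ on $p$-singular elements and a precise symmetry of the shared Brauer character under the squaring map, and this symmetry is exactly what depends on the two characters lying on the \emph{same} side of the exceptional vertex.
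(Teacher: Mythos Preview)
Your approach to part~(i) has a genuine gap. You claim the exceptional characters form a single Galois orbit, but this is false when $|D|>p$: the paper explicitly remarks that there are at least two Galois conjugacy classes of exceptional characters in that case, so (i) cannot be deduced from Galois conjugacy alone. The paper instead applies Brauer's local-to-global formula $\sum_\chi\epsilon(\chi)d_{\chi,\varphi}^{(x)}=\sum_\psi\epsilon(\psi)d_{\psi,\varphi}^{(x)}$ at each level $D_i$ of the defect group, feeds in Dade's explicit generalized decomposition numbers, and obtains a triangular system of linear equations in the unknowns $\epsilon_1,\dots,\epsilon_a$ (one $\epsilon_u$ for each Galois orbit). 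Subtracting successive equations forces $\epsilon_1=\epsilon_2=\cdots=\epsilon_a$ via a simple divisibility argument ($p$ divides an integer in $\{-2,0,2\}$). Your Galois argument recovers only the individual $\epsilon_u$, not their equality.

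For part~(ii) your plan is also off. You assert that the shared Brauer character $\phi_e$ ``is not self-contragredient in the pertinent sense,'' but the edges on the real stem are by definition the \emph{real} irreducible Brauer characters, so $\phi_e$ is self-dual. The paper's argument here is short and clean: since the two adjacent real non-exceptional characters $X_{i-1},X_i$ both have decomposition number $1$ (hence odd) with respect to the real Brauer character $\theta_i$, the Thompson--Willems theorem gives $\epsilon(X_{i-1})=\epsilon(X_i)$ immediately. Your parts~(iii) and~(iv) are closer in spirit to the paper: (iii) is exactly Thompson--Willems applied across the edge at the exceptional vertex together with~(i), and (iv) does come from evaluating $\sum_\chi\epsilon(\chi)\chi$ on a suitable $p$-singular element, though the paper needs the extra input (from its analysis of the local blocks $b_i$) that $b_0$ is not real under the hypotheses of~(iv), which makes the right-hand side vanish and forces $\epsilon_-+\epsilon_+=0$.
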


Note that (i) is not a consequence of Galois conjugacy, as there are at least two Galois conjugacy classes of exceptional characters, when $|D|>p$. 

In Proposition \ref{P:B0} we show that the F-S indicators of the exceptional characters in $B$ agree with those of the Brauer corresponding block in the normalizer of a defect group. In Theorem \ref{T:canonical} we compute this common indicator using the `canonical character' of $B$.

Next recall that an element of $G$ is said to be weakly real if it is conjugate to its inverse in $G$, but it is not inverted by any involution in $G$. Here is an application of Theorem \ref{T:main} whose statement does not refer to blocks or to modular representation theory:

\begin{Theorem}\label{T:symplectic}
Let $p$ be an odd prime and let $G$ be a finite group which has cyclic Sylow $p$-subgroups. Then the number of irreducible characters of\/ $G$ with F-S indicator $-1$ is greater than or equal to the number of conjugacy classes of weakly real $p$-elements in $G$. 
\end{Theorem}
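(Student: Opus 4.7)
My plan is to derive Theorem~\ref{T:symplectic} block by block from Theorem~\ref{T:main}. Since $G$ has cyclic Sylow $p$-subgroups and $p$ is odd, every $p$-subgroup of $G$ is cyclic, so every $p$-block of $G$ has cyclic defect group and Theorem~\ref{T:main} applies to each real $p$-block of $G$.

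First I would partition the conjugacy classes of weakly real $p$-elements according to the $p$-blocks they belong to. For each $p$-element $x$ of $G$, Brauer's theory assigns a unique $p$-block $B_x$ of $G$ (the block induced from the principal block of $C_G(x)$, whose defect group contains $\langle x\rangle$). When $x$ is real in $G$ the block $B_x$ is a real $p$-block of $G$ with cyclic defect group. In this way each conjugacy class of weakly real $p$-elements of $G$ is assigned to some real $p$-block of $G$.

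Second, fix a real $p$-block $B$ of $G$ with cyclic defect group $D$, and write $w(B)$ for the number of weakly real $p$-element classes assigned to $B$. The core of the argument is to show that $B$ contains at least $w(B)$ irreducible characters with F-S indicator $-1$. By Proposition~\ref{P:B0} the F-S indicator of the exceptional characters of $B$ coincides with that of the Brauer correspondent block $b$ in $N_G(D)$, and by Theorem~\ref{T:canonical} this common indicator is computed from the canonical character of $B$. The key representation-theoretic point I would establish is that the condition ``$x$ is weakly real'' (inverted in $G$ but by no involution) translates, via the canonical character of $B_x$, into the symplectic condition that forces F-S indicator $-1$ on the corresponding Galois orbit of exceptional characters, while ``strongly real'' would give the orthogonal condition with indicator $+1$. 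A careful bookkeeping of $N_G(D)$-orbits on the non-identity elements of $D$ and their correspondence with Galois orbits of exceptional characters of $B$ (using Theorem~\ref{T:main}(i) so that within a Galois orbit all indicators agree) then gives the block-level inequality. For blocks where Theorem~\ref{T:main}(iv) applies, real non-exceptional characters on the appropriate side of the exceptional vertex supply the $\epsilon=-1$ characters needed to match the weakly real classes assigned to $B$.

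Summing these inequalities over all real $p$-blocks of $G$ yields Theorem~\ref{T:symplectic}, since only real $p$-blocks of $G$ can receive weakly real $p$-element classes. The main obstacle is the second step, precisely the translation between the purely group-theoretic condition ``weakly real'' and the representation-theoretic condition giving F-S indicator $-1$ through the canonical character. This is the delicate part and likely requires a detailed analysis of the action of the 2-part of $N_G(\langle x\rangle)/C_G(x)$ on the canonical character of $B_x$, distinguishing it from the strongly real case where involutions in $N_G(\langle x\rangle)$ realize the inversion of $x$ and produce F-S indicator $+1$ instead.
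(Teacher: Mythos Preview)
Your proposal has a genuine gap and diverges from the paper's argument. The block assignment in your first step does not work: a $p$-element $x$ by itself does not single out a $p$-block of $G$, and the block of $G$ induced from the \emph{principal} block of $\op{C}_G(x)$ is always the principal block of $G$, by Brauer's third main theorem. Since the principal block contains the trivial character, Theorem~\ref{T:main}(iii) forces any real exceptional characters there to have indicator $+1$, so no symplectic characters are produced this way. The translation you propose in your second step, from ``weakly real'' to indicator $-1$ via Theorem~\ref{T:canonical}, is left as an acknowledged obstacle, and in fact Theorem~\ref{T:canonical} plays no role in the paper's proof of Theorem~\ref{T:symplectic}.

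The paper does not argue block by block. It chooses a single weakly real $p$-element $x$ of \emph{maximal order}, sets $Q=\langle x\rangle$ and $N=\op{N}_G(Q)$, and applies Lemma~\ref{L:odd_normal} to the odd normal subgroup $Q$ of $N$ with a faithful $\lambda\in\op{Irr}(Q)$. The weakly-real hypothesis is precisely the statement that the extended stabilizer $N_\lambda^*$ does not split over $N_\lambda$, so Lemma~\ref{L:odd_normal} yields $\chi\in\op{Irr}(N\mid\lambda)$ with $\epsilon(\chi)=-1$. This $\chi$ is an exceptional character in some $B_i$, and Proposition~\ref{P:B0} transports the indicator $-1$ to all exceptional characters of $B=B_i^G$. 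A single inequality then finishes: the $\frac{|D|-1}{e}$ exceptional characters of this one block $B$ already dominate the number of weakly real $p$-classes of $G$, all of which are represented in $Q^\times$ by the maximality of $|x|$. The missing ingredient in your plan is Lemma~\ref{L:odd_normal}, which converts the non-splitting condition directly into a symplectic character and makes the block-by-block bookkeeping unnecessary.
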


We use the notation and results of \cite{NT} for group representation theory, and use \cite{D} and \cite[VII]{F} for notation specific to blocks with cyclic defect. When referring to the character tables of a finite simple group we use the conventions of the {\em ATLAS} \cite{A}. For other character tables, we use the notation of the computer algebra system {\em GAP} \cite{GAP}. 

\section{Examples}\label{S:examples}

We begin with a number of examples which illustrate the possible patterns of F-S indicators in a block which has a cyclic defect group. Throughout $G$ is a finite group and $B$ is a real $p$-block of $G$ which has a cyclic defect group $D$. Also $N_0$ is the normalizer in $G$ of the unique order $p$ subgroup of $D$ and $B_0$ is the Brauer correspondent of $B$ in $N_0$.  

\medskip
{\bf Example 1:} There are many blocks with cyclic defect group whose irreducible characters all have the same F-S indicators. For blocks with all indicators $+1$, choose $n\geq2$, a prime $p$ with $n/2\leq p\leq n$ and any $p$-block of the symmetric group $S_n$. There are numerous blocks with all indicators $-1$ among the faithful $p$-blocks of the double cover $2.A_n$ of an alternating group, with $n/2\leq p\leq n$ e.g. the four faithful irreducible characters of $2.A_5$ have F-S indicator $-1$ and constitute a $5$-block with a cyclic defect group.

\medskip
{\bf Example 2:} If $e$ is odd then $B$ has a real non-exceptional character. Now it follows from \cite[Part 2 of Theorem 1 \& Corollary 1.9]{D} that $B$ has a Galois conjugacy class consisting of $\frac{p-1}{e}$ exceptional characters. So $B$ has a real exceptional character if $\frac{p-1}{e}$ is odd. Thus $B$ always has a real irreducible character if $p\equiv3\Mod4$.

When $e$ is even and $p\equiv1\Mod4$, $B$ may have no real irreducible characters. For example SmallGroup$(80,29)=\langle a,b\mid a^{20},a^{10}=b^4,a^b=a^7\rangle$ has such a block, for $p=5$. It consists of the four irreducible characters lying over the non-trivial irreducible character of $\langle a^{10}\rangle$. Here is its character table. The first two rows indicate the 2 and 5 parts of the class centralizers. The third row labels the classes by their element orders:
$$
\begin{array}{lrrrrrrrrrrrrrrr}
 &2&4&4&3&3&4&4&2&3&3&3&3&2&2&2\\
 &5&1&1&.&1&.&.&1&.&.&.&.&1&1&1\\
 \\
 &&1a&2a&2b&4a&4b&4c&5a&8a&8b&8c&8d&10a&20a&20b\\
 \\
X.9&&2&-2&.&.&2i&-2i&2&.&.&.&.&-2&.&.\\
X.10&&2&-2&.&.&-2i&2i&2&.&.&.&.&-2&.&.\\
X.13&&4&-4&.&.&.&.&-1&.&.&.&.&1&\sqrt{-5}&-\sqrt{-5}\\
X.14&&4&-4&.&.&.&.&-1&.&.&.&.&1&-\sqrt{-5}&\sqrt{-5}\\
\end{array}
$$
Note that SmallGroup$(80,29)$ has Sylow $2$-subgroups isomorphic to SmallGroup$(16,6)=\langle s,t \mid s^8,t^2, s^t=s^5\rangle$. This 2-group is sometimes denoted $M_4(2)$.

\medskip
{\bf Example 3:} $B$ may have a real non-exceptional character but no real exceptional characters. For example SmallGroup$(60,7)=\langle a,b\mid a^{15},b^4,a^b=a^2\rangle$ has such a block, for $p=5$. It consists of the four irreducible characters lying over a non-trivial irreducible character of $\langle a^5\rangle$. This is also an example of part (iv) of Theorem \ref{T:main}; the non-exceptional characters $X.5$ and $X.6$ have F-S indicators $-1$ and $+1$, respectively. Here is the table of character values, with $\alpha=(1+\sqrt{-15})/2$:
$$
\begin{array}{lrrrrrrrrrr}
 &2&2&2&1&2&2&.&1&.&.\\
 &3&1&1&1&.&.&1&1&1&1\\
 &5&1&.&1&.&.&1&.&1&1\\
 \\
 &&1a&2a&3a&4a&4b&5a&6a&15a&15b\\
 \\
X.5&&2&-2&-1&.&.&2&1&-1&-1\\
X.6&&2&2&-1&.&.&2&-1&-1&-1\\
X.8&&4&.&-2&.&.&-1&.&\alpha&\ov\alpha\\
X.9&&4&.&-2&.&.&-1&.&\ov\alpha&\alpha\\
\end{array}
$$

\medskip
{\bf Example 4:} There is no apparent relationship between the F-S indicators of the non-exceptional characters in $B$ and in $B_0$. For example, let $B$ be the $5$-block $2.A_8$ with $\op{Irr}(B)=\{\chi_{15},\chi_{19},\chi_{21},\chi_{22}\}$. Then the two non-exceptional characters $\chi_{15}$ and $\chi_{19}$ have F-S indicator $+1$ and $-1$, respectively. However $B_0$ is a real block which has no real irreducible characters.

The character table of $B$ can be found on p22 of {\em The Atlas}. Now $N_0$ is isomorphic to SmallGroup$(120,7)=\langle a,b\mid a^{15},b^8,a^b=a^2\rangle$. Here is the table of character values of its $5$-block $B_0$. Again $\alpha=(1+\sqrt{-15})/2$. In order to save space, we have omitted $4$ columns of zero values for the four classes of elements of order $8$:
$$
\begin{array}{lrrrrrrrrrrrrrrrrrrr}
 &2&3&3&2&3&3&1&2&1&2&2&1&1&1&1\\
 &3&1&1&1&1&1&1&1&1&1&1&1&1&1&1\\
 &5&1&1&1&.&.&1&1&1&.&.&1&1&1&1\\
 \\
 &&1a&2a&3a&4a&4b&5a&6a&10a&12a&12b&15a&15b&30a&30b\\
\\
X.11&&2&-2&-1&2i&-2i&2&1&-2&-i&i&-1&-1&1&1\\
X.12&&2&-2&-1&-2i&2i&2&1&-2&i&-i&-1&-1&1&1\\
X.15&&4&-4&-2&.&.&-1&2&1&.&.&\alpha&\ov\alpha&-\alpha&-\ov\alpha\\
X.16&&4&-4&-2&.&.&-1&2&1&.&.&\ov\alpha&\alpha&-\ov\alpha&-\alpha\\
\end{array}
$$

We note that $B$ has $2$ irreducible modules and $2$ weights, in conformity with Alperin's weight conjecture \cite{Al}. However the irreducible modules are self-dual and the weights are duals of each other. This shows that there is no obvious `real' version of the weight conjecture for $p$-blocks, when $p\ne2$.

Consider the inclusion of groups $N_0<\op{PSL}_2(11)<M_{11}$, where $N_0\cong{11:5}$. The principal $11$-blocks each have $5$ non-exceptional characters. It is somewhat surprising that the number of real non-exceptional characters in these blocks is $1$, $5$ and $3$, respectively.


\medskip
{\bf Example 5:} Finally $B$ may have a real exceptional character but no real non-exceptional characters. For example let $B$ be the $5$-block containing the four faithful irreducible characters of SmallGroup$(20,1)=\langle a,b\mid a^5,b^4,a^b=a^{-1}\rangle$. The two exceptional characters have F-S indicators $-1$, but neither of the two non-exceptional characters is real. Here is the character table of $B$, with $\beta=(-1+\sqrt{5})/2$ and $*\beta=(-1-\sqrt{5})/2$:
$$
\begin{array}{lrrrrrrrrr}
 &2&2&2&2&2&1&1&1&1\\
 &5&1&1&.&.&1&1&1&1\\
 \\
 &&1a&2a&4a&4b&5a&5b&10a&10b\\
 \\
X.3&&1&-1&i&-i&1&1&-1&-1\\
X.4&&1&-1&-i&i&1&1&-1&-1\\
X.5&&2&-2&.&.&\beta&{*\beta}&-\beta&-{*\beta}\\
X.6&&2&-2&.&.&{*\beta}&\beta&-{*\beta}&-\beta\\
\end{array}
$$

\section{Miscellaneous results}\label{S:misc}

We need general results from representation theory, some of which are not so well-known. So in this section $p$ is a prime and $B$ is a $p$-block of a finite group $G$.

Let $\chi$ be an irreducible character in $B$, let $x$ be a $p$-element of $G$ and let $y$ be a $p$-regular element of $\op{C}_G(x)$. Then
$$
\chi(xy)=\sum_{\varphi}d_{\chi,\varphi}^{(x)}\varphi(y),
$$
where $\varphi$ ranges over the irreducible Brauer characters in blocks of $\op{C}_G(x)$ which Brauer induce to $B$, and each $d_{\chi,\varphi}^{(x)}$ is an algebraic integer, called a generalized decomposition number; if $x=1$, $\varphi$ is an irreducible Brauer character in $B$ and $d_{\chi,\varphi}^{(x)}$ is simplified to $d_{\chi,\varphi}$. It is an integer called an ordinary decomposition number of $B$.

Brauer \cite[Theorem (4A)]{B} used his Second Main Theorem to prove the following remarkable `local-to-global' formula for F-S indicators:
\begin{equation}\label{E:BrauerIII}
\sum_{\chi}\epsilon(\chi)d_{\chi,\varphi}^{(x)}=
\sum_{\psi}\epsilon(\psi)d_{\psi,\varphi}^{(x)},
\end{equation}
where $\chi$ ranges over the irreducible characters in $B$ and $\psi$ ranges over the irreducible characters in blocks of $\op{C}_G(x)$ which Brauer induce to $B$. We have previously used this formula to determine the F-S indicators of the irreducible characters in 2-blocks with a cyclic, Klein-four or dihedral defect group.

Our next result relies on Clifford theory. However it was inspired by (and can be proved using) the notion of a weakly real $2$-block, as introduced in \cite{M1}. Suppose that $N$ is a normal subgroup of $G$ and $\phi\in\op{Irr}(N)$, with stabilizer $G_\phi$ in $G$.  If $G_\phi\subseteq H\subseteq G$, the Clifford correspondence is a bijection $\op{Irr}(G\mid\phi)\leftrightarrow\op{Irr}(H\mid\phi)$ such that $\chi\leftrightarrow\psi$ if and only if $\langle\chi{\downarrow_H},\phi\rangle\ne0$ or $\chi=\psi{\uparrow^G}$. The stabilizer of $\{\phi,\ov\phi\}$ in $G$ is called the extended stabilizer of $\phi$, here denoted by $G_\phi^*$. So $|G_\phi^*:G_\phi|\leq2$, with equality if and only if $\phi\ne\ov\phi$ but $\phi$ is $G$-conjugate to $\ov\phi$. If $G_\phi^*\subseteq H$ it is easy to see that $\chi$ is real if and only if $\psi$ is real. Moreover in this case $\epsilon(\chi)=\epsilon(\psi)$.

We need one other idea. Suppose that $T$ is a degree $2$ extension of $G$. Then the {\em Gow indicator} \cite[2.1]{G} of a character $\chi$ of $G$ with respect to $T$ is defined to be
$$
\epsilon_{T/G}(\chi):=\frac{1}{|G|}\sum_{t\in T\backslash G}\chi(t^2).
$$
Clearly $\epsilon(\chi{\uparrow^T})=\epsilon(\chi)+\epsilon_{T/G}(\chi)$. Just like the F-S indicator, $\epsilon_{T/G}(\chi)=0,\pm1$, for each $\chi\in\op{Irr}(G)$. Moreover $\epsilon_{T/G}(\chi)\ne0$ if and only if $\chi$ is $T$-conjugate to $\ov\chi$.

\begin{Lemma}\label{L:odd_normal}
Let $N$ be a normal odd order subgroup of\/ $G$ and let $\phi\in\op{Irr}(N)$. Suppose that $G_\phi^*$ does not split over $G_\phi$. Then there exists $\chi\in\op{Irr}(G\mid\phi)$ such that\/ $\epsilon(\chi)=-1$.
\begin{proof}
We first show that there exists $\psi\in\op{Irr}(G\mid\phi)$ such that $\epsilon(\psi)=+1$. For let $S$ be a Sylow $2$-subgroup of $G$. As $\phi{\uparrow^G}$ vanishes on the $2$-singular elements of $G$, we have $(\phi{\uparrow^G}){\downarrow_S}=\frac{\phi(1)|G|}{|N||S|}\rho_S$, where $\rho_S$ is the regular character of $S$. Now $\frac{\phi(1)|G|}{|N||S|}$ is an odd integer. So $\langle(\phi{\uparrow^G}){\downarrow_S},1_S\rangle$ is odd. Moreover $\phi{\uparrow^G}$ is a real character of $G$. So $\langle(\phi{\uparrow^G}),\psi\rangle=\langle(\phi{\uparrow^G}),\ov\chi\rangle$, for each $\psi\in\op{Irr}(G)$. Pairing each irreducible character of $G$ with its complex conjugate, we see that there exists a real-valued $\psi\in\op{Irr}(G\mid\phi)$ such that $\langle\psi{\downarrow_S},1_S\rangle$ is odd. Then $\epsilon(\psi)=\epsilon(1_S)=+1$.

Following the discussion before the lemma, we may assume that $G=G_\phi^*$.  So $|G:G_\phi|=2$. Next suppose that $g\in G$ and $\phi{\uparrow^{G_\phi}}(g^2)\ne0$. Write $g=xy=yx$, where $x$ is a $2$-element and $y$ is a $2$-regular element. Then $g^2=x^2y^2$. As $\phi{\uparrow^{G_\phi}}$ vanishes off $N$, we have $x^2=1$ and $y^2\in N$. So $x\in G_\phi$, as $G_\phi$ contains all involutions in $G$. Moreover $y\in N$, as $y$ has odd order. Thus $g\in G_\phi$, whence
$$
\epsilon_{G/G_\phi}(\phi{\uparrow^{G_\phi}})=\frac{1}{|G_\phi|}\sum_{g\in G\backslash G_\phi}\phi{\uparrow^{G_\phi}}(g^2)=0.
$$
Now $\op{Irr}({G_\phi\mid\phi})$ contains no real characters, as $\phi\ne\ov\phi$. So $\epsilon(\phi{\uparrow^G})=\epsilon_{G/G_\phi}(\phi{\uparrow^{G_\phi}})+\epsilon(\phi{\uparrow^{G_\phi}})=0$. Equivalently
$$
\sum_{\chi\in\op{Irr}(G)}\langle\phi{\uparrow^G},\chi\rangle\epsilon(\chi)=0.
$$
Together with the fact that $\langle\phi{\uparrow^G},\psi\rangle\epsilon(\psi)>0$, this implies that $\langle\phi{\uparrow^G},\chi\rangle\epsilon(\chi)<0$, for some $\chi\in\op{Irr}(G)$. Thus $\chi\in\op{Irr}(G\mid\phi)$ and $\epsilon(\chi)=-1$, which completes the proof.
\end{proof}
\end{Lemma}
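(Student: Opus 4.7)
The plan is to reduce first, via the Clifford correspondence recalled just before the lemma, to the case $G = G_\phi^*$; since $G_\phi^* = G_\phi$ would split trivially, the hypothesis forces $|G:G_\phi| = 2$ and $\phi \ne \ov\phi$ (though $\phi$ is $G$-conjugate to $\ov\phi$). I then aim to exhibit a real character $\psi \in \op{Irr}(G\mid\phi)$ with $\epsilon(\psi) = +1$, and independently to show $\epsilon(\phi\uparrow^G) = 0$. The identity $\epsilon(\phi\uparrow^G) = \sum_{\chi \in \op{Irr}(G\mid\phi)} \langle \phi\uparrow^G, \chi\rangle\,\epsilon(\chi)$ will then force some term to be strictly negative, producing the desired $\chi$ with $\epsilon(\chi) = -1$.

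For the positive character, I would examine $\phi\uparrow^G$ restricted to a Sylow $2$-subgroup $S$ of $G$. Since $N$ has odd order, $\phi\uparrow^G$ vanishes on $2$-singular elements of $G$, so $(\phi\uparrow^G){\downarrow_S}$ is a multiple of the regular character $\rho_S$, with odd multiplier $\phi(1)|G|/(|N||S|)$. As $\phi\uparrow^G$ is real (because $\phi$ and $\ov\phi$ are $G$-conjugate), pairing complex-conjugate constituents shows that some real-valued $\psi \in \op{Irr}(G\mid\phi)$ has $\langle\psi{\downarrow_S}, 1_S\rangle$ odd; a standard Frobenius--Schur argument in the $2$-group $S$ then gives $\epsilon(\psi) = \epsilon(1_S) = +1$. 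For the vanishing of $\epsilon(\phi\uparrow^G)$, I would apply the Gow-indicator identity $\epsilon(\phi\uparrow^G) = \epsilon(\phi\uparrow^{G_\phi}) + \epsilon_{G/G_\phi}(\phi\uparrow^{G_\phi})$; the first summand vanishes because $\phi \ne \ov\phi$ forces $\op{Irr}(G_\phi\mid\phi)$ to contain no real characters.

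For the Gow indicator $\epsilon_{G/G_\phi}(\phi\uparrow^{G_\phi})$, I would analyse which $g \in G\setminus G_\phi$ satisfy $\phi\uparrow^{G_\phi}(g^2) \ne 0$: writing $g = xy$ with commuting $2$-part $x$ and $2'$-part $y$ gives $g^2 = x^2y^2 \in N$, hence $x^2 = 1$ and $y^2 \in N$. Since $y$ has odd order and $N$ has odd order, $y \in N \subseteq G_\phi$, so $x \in G\setminus G_\phi$ would have to be an involution. Precisely here the non-splitting hypothesis enters: it forbids any involution in $G \setminus G_\phi$, so no such $g$ exists and the Gow indicator vanishes. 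The main conceptual obstacle is exactly this translation of the group-theoretic non-splitting assumption into the vanishing of $\epsilon_{G/G_\phi}(\phi\uparrow^{G_\phi})$; once that link is made, the counting identity above together with the strictly positive contribution from $\psi$ forces the existence of a constituent with F-S indicator $-1$.
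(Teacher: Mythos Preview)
Your proposal is correct and follows essentially the same approach as the paper's proof: find a constituent $\psi$ with $\epsilon(\psi)=+1$ via the Sylow $2$-subgroup restriction argument, reduce to $G=G_\phi^*$ via Clifford correspondence, show $\epsilon(\phi{\uparrow^G})=0$ by combining $\epsilon(\phi{\uparrow^{G_\phi}})=0$ with the vanishing of the Gow indicator (using that non-splitting forbids involutions outside $G_\phi$), and conclude by the sign-counting identity. The only cosmetic difference is that you perform the Clifford reduction before the $\psi$-argument rather than after, which is harmless since both steps are valid in the reduced group.
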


It is well-known that each $G$-invariant irreducible character of a normal subgroup of $G$ extends to $G$, when the quotient group is cyclic.
 
\begin{Lemma}\label{L:cyclic_real_extension}
Suppose that $N$ is a normal subgroup of $G$ such that $G/N$ is cyclic and of even order. Let $\varphi\in\op{Irr}(N)$ be real and $G$-invariant. Then $\varphi$ has a real extension to $G$ if and only if\/ $\varphi$ has a real extension to $T$, where $N\subset T\subseteq G$ and $T/N$ has order $2$. 
\end{Lemma}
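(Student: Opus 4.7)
The strategy I would use is to parametrize the extensions of $\varphi$ to $G$ and to $T$ by linear characters of the corresponding quotients, and to reduce the reality condition in both cases to a single condition on one linear character $\mu$ of $G/N$.

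The forward direction is immediate: restricting a real extension from $G$ to $T$ gives a real extension to $T$.

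For the converse, I would first invoke the quoted fact (recalled just before the lemma) that $\varphi$ extends to $G$, and fix one such extension $\chi\in\op{Irr}(G)$. Since $\varphi$ is real, $\ov\chi$ is also an extension of $\varphi$, so there is a unique $\mu\in\widehat{G/N}$ with $\ov\chi=\chi\mu$. Every extension of $\varphi$ to $G$ has the form $\chi\lambda$ with $\lambda\in\widehat{G/N}$, and the identity $\ov{\chi\lambda}=\chi\mu\ov\lambda$ shows that $\chi\lambda$ is real iff $\lambda^2=\mu$. So $\varphi$ has a real extension to $G$ iff $\mu$ is a square in the cyclic group $\widehat{G/N}$. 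Running the same argument for $T$: the extensions of $\varphi$ to $T$ are $\chi{\downarrow_T}\nu$ with $\nu\in\widehat{T/N}$, and reality forces $\nu^2=\mu{\downarrow_T}$; since $|\widehat{T/N}|=2$ we have $\nu^2=1$, so the condition becomes $\mu{\downarrow_T}=1$. Thus $\varphi$ has a real extension to $T$ iff $\mu$ restricts trivially to $T/N$.

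The proof then reduces to checking that $\mu$ is a square in $\widehat{G/N}$ iff $\mu{\downarrow_T}=1$. Writing $|G/N|=2n$ and fixing a generator $xN$ of $G/N$, so that $T/N=\langle x^nN\rangle$, a character $\mu\in\widehat{G/N}$ is determined by the $2n$-th root of unity $\zeta=\mu(xN)$. Then $\mu$ lies in the (unique index-$2$) subgroup of squares of $\widehat{G/N}$ iff $\zeta$ is an $n$-th root of unity, that is, iff $\zeta^n=1$; and $\mu{\downarrow_T}=1$ iff $\mu(x^nN)=\zeta^n=1$. Both conditions coincide.

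I expect the only step requiring any care to be the setup that identifies the reality of an extension with a square-root condition on $\mu$, together with keeping the various linear characters and restrictions straight. The final identification inside the cyclic group $\widehat{G/N}$ is a routine one-line computation, and I do not anticipate any real obstacle.
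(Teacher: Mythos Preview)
Your proposal is correct and follows essentially the same approach as the paper: fix an extension $\chi$ of $\varphi$ to $G$, write $\ov\chi=\chi\mu$ for some linear character $\mu$ of $G/N$, and show that the hypothesis on $T$ forces $\mu$ to be a square in the cyclic group $\widehat{G/N}$, so that a suitable twist $\chi\lambda$ is real. The only cosmetic difference is that the paper first observes that \emph{both} extensions of $\varphi$ to $T$ are real (so $\chi{\downarrow_T}$ is automatically real) and reads off $\mu{\downarrow_T}=1$ directly, whereas you derive the reality condition on $T$ symmetrically as $\nu^2=\mu{\downarrow_T}$ and then use $\nu^2=1$; the content is identical.
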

\begin{proof}
The `only if' part is obvious. So assume that $\varphi$ has a real extension to $T$. Then both extensions of $\varphi$ to $T$ are real. Let $\omega$ be a generator of the abelian group $\op{Irr}(G/N)$ and let $\chi$ be any extension of $\varphi$ to $G$. Then $\omega^i\chi$, $i\geq0$ give all extensions of $\varphi$ to $G$. Here $\omega^i=\omega^j$ if and only if $i\equiv j\Mod{|G/N|}$.
 
As $\ov\chi$ lies over $\varphi$, we have $\ov\chi=\omega^i\chi$, for some $i\geq0$. Now $\chi{\downarrow_T}$ is an extension of $\varphi$ to $T$ and $\ov\chi{\downarrow_T}=(\omega^i{\downarrow_T})(\chi{\downarrow_T})$. As $\chi{\downarrow_T}$ is real, it follows that $\omega^i{\downarrow_T}$ is trivial. So $T\subseteq\op{ker}(\omega^i)$, whence $i\equiv 2j\Mod{|G/N|}$, for some $j\geq0$. Now $\ov{\omega^j\chi}=\omega^{i-j}\chi=\omega^j\chi$. So $\omega^j\chi$ is a real extension of $\varphi$ to $G$.
\end{proof}

Notice that in this context $\varphi$ has a real extension to $T$ if and only if $\epsilon(\varphi)=\epsilon_{T/N}(\varphi)$. When $G/N$ has even order, but is not cyclic, and $\varphi$ is a real irreducible character of $N$ which extends to $G$, it is not clear whether there is a sensible sufficient criteria for $\varphi$ to have a real extension to $G$.

Finally we need the following consequence of the first orthogonality relation:

\begin{Lemma}\label{L:gen_sum}
Let $W\subseteq X\subseteq Y$ be finite abelian groups. Then for $\lambda\in\op{Irr}(Y)$ we have
$$
\sum_{x\in X\backslash W}\lambda(x)=
\left\{\begin{array}{rl}
        |X|-|W|,&\quad\mbox{if\/ $X\subseteq\op{ker}(\lambda)$.}\\
               -|W|,&\quad\mbox{if\/ $W\subseteq\op{ker}(\lambda)$ but $X\not\subseteq\op{ker}(\lambda)$.}\\
               0,&\quad\mbox{if\/ $W\not\subseteq\op{ker}(\lambda)$.}\\
       \end{array}
\right.
$$
\end{Lemma}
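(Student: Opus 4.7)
My plan is to reduce the sum to a difference of two subgroup sums and apply the standard first orthogonality fact: for an abelian group $Y$, a character $\lambda\in\op{Irr}(Y)$ and a subgroup $H\leq Y$, the restriction $\lambda{\downarrow_H}$ is a character of $H$, so
$$
\sum_{h\in H}\lambda(h)=\begin{cases}|H|,&\lambda{\downarrow_H}=1_H,\\0,&\lambda{\downarrow_H}\ne1_H,\end{cases}
$$
and $\lambda{\downarrow_H}=1_H$ is equivalent to $H\subseteq\op{ker}(\lambda)$.

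Since $W\subseteq X$, the complement $X\setminus W$ (read from the notation $X\backslash W$) is a disjoint decomposition, so
$$
\sum_{x\in X\setminus W}\lambda(x)=\sum_{x\in X}\lambda(x)-\sum_{x\in W}\lambda(x).
$$
I would then split into the three cases listed in the statement. If $X\subseteq\op{ker}(\lambda)$, both sums evaluate to $|X|$ and $|W|$, yielding $|X|-|W|$. If $W\subseteq\op{ker}(\lambda)$ but $X\not\subseteq\op{ker}(\lambda)$, the first sum vanishes while the second equals $|W|$, yielding $-|W|$. If $W\not\subseteq\op{ker}(\lambda)$, then \emph{a fortiori} $X\not\subseteq\op{ker}(\lambda)$ because $W\subseteq X$, so both sums vanish, yielding $0$.

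There is no real obstacle here; the only point worth a line of justification is that $W\subseteq X$ forces the implication ``$X\subseteq\op{ker}(\lambda)\Rightarrow W\subseteq\op{ker}(\lambda)$'', so the three listed cases genuinely cover all possibilities. The ambient group $Y$ plays no role other than to provide a common container in which $\lambda$ is defined, so I would not bother invoking it beyond noting that $\lambda$ restricts to a linear character on each of $W$ and $X$.
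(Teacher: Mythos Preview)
Your proof is correct and is exactly the argument the paper has in mind: the lemma is stated there without proof, merely as ``a consequence of the first orthogonality relation,'' and your decomposition $\sum_{x\in X\setminus W}\lambda(x)=\sum_{x\in X}\lambda(x)-\sum_{x\in W}\lambda(x)$ together with the standard evaluation of $\sum_{h\in H}\lambda(h)$ is precisely that consequence.
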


\section{The Brauer tree and its real-stem}\label{S:real-stem}

From now on $G$ is a finite group, $p$ is an odd prime and $B$ is a real $p$-block of $G$ which has a cyclic defect group. To avoid trivialities we assume that the defect group is non-trivial.

Dade asserts \cite[Theorem 1, Part 2]{D} that each decomposition number in $B$ is either $0$ or $1$. The Brauer tree of $B$ is a planar graph with edges labelled by the irreducible Brauer character in $B$ and with vertices labelled by the irreducible characters in $B$ (the exceptional characters in $B$ label a single `exceptional' vertex). The edge labelled by an irreducible Brauer character $\theta$ meets the vertex labelled by an irreducible character $\chi$ if and only if the decomposition number $d_{\chi,\theta}$ is not $0$.

When $B$ is real, complex conjugation acts on the Brauer tree of $B$, and in particular fixes the exceptional vertex. However, as we have seen in Examples 2,3 and 4 above, $B$ may have no real exceptional characters. So we restate \cite[VII,9.2]{F} in the following more precise fashion:

\begin{Lemma}\label{L:real_stem}
The subgraph of the Brauer tree of $B$ consisting of the exceptional vertex and those vertices and edges which correspond to real characters and Brauer characters is a straight line segment.
\end{Lemma}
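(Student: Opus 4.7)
The plan is to recognise the subgraph described in the statement as the fixed-point set of the involutive graph automorphism $c$ of the Brauer tree induced by complex conjugation $\chi\mapsto\ov\chi$. Complex conjugation preserves the set of exceptional characters (it is a union of Galois orbits, hence Galois-stable), so $c$ fixes the exceptional vertex setwise; a non-exceptional vertex is $c$-fixed precisely when its character is real, and an edge is $c$-fixed precisely when the associated irreducible Brauer character is real. So the subgraph in the statement is exactly the $c$-fixed subgraph of the Brauer tree, and the lemma is the assertion that this fixed subgraph is a line segment.

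The first step would be to show connectedness. For any two $c$-fixed vertices $u,v$, the unique tree-path from $u$ to $v$ is mapped by $c$ to another path from $u$ to $v$, so by uniqueness of paths in a tree, $c$ stabilises this path setwise. An involutive automorphism of a path that fixes both endpoints must be the identity on the path, so every vertex and edge of the path is $c$-fixed. Thus the $c$-fixed subgraph is a subtree containing the exceptional vertex.

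The main obstacle is ruling out branching, i.e.\ showing that every $c$-fixed vertex has at most two $c$-fixed edges incident to it. Here the planar structure of the Brauer tree is essential: around each vertex $v$ the incident edges carry a cyclic order that encodes the Loewy structure of the two uniserial summands of the heart of the projective indecomposable module labelled by $v$ (cf.\ \cite[VII]{F}). Complex conjugation acts on this local cyclic datum by reversal rather than rotation, because it swaps the two uniserial summands of each heart via duality. A reversal of a cyclic $d$-set has at most two fixed elements, so at a $c$-fixed vertex $v$ at most two incident edges can be $c$-fixed. Combined with connectedness, every vertex of the $c$-fixed subtree has degree at most two in that subtree, so it is a line segment as required.

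The substantive input is the orientation-reversal property of $c$ on the cyclic order at each fixed vertex, which is essentially the content of \cite[VII,9.2]{F}; the reformulation here merely makes explicit that the exceptional vertex, being $c$-fixed as a set, lies on the resulting line segment regardless of whether it supports a real character.
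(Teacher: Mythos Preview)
The paper does not supply its own proof of this lemma; it explicitly presents Lemma~\ref{L:real_stem} as a restatement of \cite[VII,9.2]{F}, contributing only the remark (in the paragraph immediately preceding the lemma) that complex conjugation fixes the exceptional vertex setwise, so that this vertex lies on the real-stem regardless of whether $B$ has a real exceptional character. Your proposal supplies precisely the standard argument underlying the cited result: connectedness of the fixed subgraph follows from uniqueness of paths in a tree, and branching is ruled out because complex conjugation reverses the planar cyclic ordering of edges at each fixed vertex---this reversal property is the substantive input from Green's original treatment \cite{Gr} and Feit's exposition. Your sketch is correct, and your closing paragraph accurately separates the routine tree-combinatorics from the genuine content borrowed from the literature.
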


Feit calls this line segment the real-stem of $B$. An easy consequence is:

\begin{Corollary}\label{C:number_real_non_exceptional}
The number of real non-exceptional characters in $B$ equals the number of real irreducible Brauer characters in $B$.
\begin{proof}
Suppose that $B$ has $r$ real irreducible Brauer characters. Then the real-stem of the Brauer tree has $r$ edges and $r+1$ vertices. One of these is the exceptional vertex. So $B$ has $r$ real non-exceptional characters.  
\end{proof}
\end{Corollary}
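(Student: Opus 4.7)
The plan is to turn this into a purely combinatorial counting problem on the real-stem, leveraging Lemma \ref{L:real_stem} as the essential input. Once we know the relevant subgraph is a line segment (equivalently a path), the rest is just counting vertices versus edges.

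First I would let $r$ denote the number of real irreducible Brauer characters in $B$. By Lemma \ref{L:real_stem}, the real-stem $\mathcal{S}$ of the Brauer tree of $B$ is a straight line segment, and by the definition of the Brauer tree, an edge of $\mathcal{S}$ corresponds to a real irreducible Brauer character in $B$, while every such character appears as an edge of $\mathcal{S}$ (since it is by definition part of the real subgraph described in the lemma). Hence $\mathcal{S}$ has exactly $r$ edges.

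Next I would use the elementary fact that a path with $r$ edges has exactly $r+1$ vertices. Each vertex of $\mathcal{S}$ is either the exceptional vertex or an ordinary real non-exceptional vertex, since $\mathcal{S}$ is explicitly described in Lemma \ref{L:real_stem} as consisting of the exceptional vertex together with vertices corresponding to real characters. By Lemma \ref{L:real_stem} the exceptional vertex lies on $\mathcal{S}$, and it is a single vertex; conversely every real non-exceptional character of $B$ labels a vertex of $\mathcal{S}$. Consequently the $r+1$ vertices of $\mathcal{S}$ split as one exceptional vertex plus $r$ real non-exceptional vertices, giving the desired equality.

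I do not expect any real obstacle here, because Lemma \ref{L:real_stem} has already done the hard work of identifying the real part of the tree as a path; the corollary is then forced by the universal identity $\#\text{vertices} = \#\text{edges} + 1$ for a path. The only thing one has to be mildly careful about is the role of the exceptional vertex, which could a priori be an endpoint or interior vertex of the segment, but either way it contributes exactly one vertex to the count and hence does not affect the bijection between the $r$ edges and the $r$ real non-exceptional vertices.
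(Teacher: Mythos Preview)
Your proposal is correct and follows essentially the same approach as the paper's proof: both count the $r$ edges of the real-stem as real irreducible Brauer characters, use that a path with $r$ edges has $r+1$ vertices, and subtract the single exceptional vertex to get $r$ real non-exceptional characters. Your version simply spells out in more detail the bijections that the paper takes for granted.
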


Let $\theta$ be a real irreducible $p$-Brauer character of a finite group $G$. As $p$ is odd, the $G$-representation space of $\theta$ affords a non-degenerate $G$-invariant bilinear form which is either symmetric or skew-symmetric. Given the symmetry groups of such forms, we refer to $\theta$ as being of orthogonal or symplectic type. Thompson and Willems \cite[2.8]{W} proved that there is a real irreducible character $\chi$ of $G$ such that $d_{\chi,\theta}$ is odd. Moreover $\theta$ has orthogonal type if $\epsilon(\chi)=+1$ or symplectic type if $\epsilon(\chi)=-1$. This implies that $\epsilon(\psi)=\epsilon(\chi)$, for all real irreducible characters $\psi$ such that $d_{\psi,\theta}$ is odd.

\begin{proof}[Proof of part (ii) of Theorem \ref{T:main}]
Let $X$ and $Y$ be real non-exceptional characters which lie on the same side of the exceptional vertex in the real-stem of $B$. Then by Lemma \ref{L:real_stem} there is a sequence $X=X_0,X_1,\dots,X_n=Y$ of real non-exceptional characters and a sequence $\theta_1,\dots,\theta_n$ of real irreducible Brauer characters such that $d_{X_{i-1},\theta_i}=1=d_{X_i,\theta_i}$, for $i=1,\dots,n$. The Thompson-Willems result implies that $\epsilon(X_{i-1})=\epsilon(X_i)$, for $i=1,\dots,n$. So $\epsilon(X)=\epsilon(Y)$. This gives part (ii) of Theorem \ref{T:main}.
\end{proof}

A similar argument gives the following weak form of parts (i) and (iii) of Theorem \ref{T:main}:

\begin{Lemma}\label{L:main(iii)}
If $B$ has a real exceptional character and a real non-exceptional character, then all real irreducible characters in $B$ have the same F-S indicators.
\end{Lemma}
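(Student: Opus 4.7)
The plan is to reuse the Thompson--Willems machinery that powered the proof of part (ii), applied this time to a real Brauer character labelling an edge of the real-stem incident to the exceptional vertex. Such an edge exists: by Lemma \ref{L:real_stem} the real-stem is a connected line segment through the exceptional vertex, and under the hypotheses of the lemma it contains at least one real non-exceptional vertex, so the exceptional vertex has an adjacent real-stem edge. Write $\theta$ for the irreducible Brauer character labelling one such edge and $X$ for the real non-exceptional character at its other endpoint; both are real because they lie on the real-stem.

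Because $\theta$ meets the exceptional vertex, $d_{\chi,\theta}=1$ for every exceptional character $\chi$ in $B$, and $d_{X,\theta}=1$ as well. The Thompson--Willems result quoted in the excerpt then pins down a single value $\varepsilon_\theta\in\{\pm1\}$, determined by the orthogonal or symplectic type of $\theta$, such that every real irreducible character $\psi$ in $B$ with $d_{\psi,\theta}$ odd satisfies $\epsilon(\psi)=\varepsilon_\theta$. Applied to $X$ and to every real exceptional character $\chi_0$ in $B$, this forces $\epsilon(X)=\varepsilon_\theta=\epsilon(\chi_0)$; in particular all real exceptional characters share the common indicator $\varepsilon_\theta$.

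Part (ii) of Theorem \ref{T:main}, already proved, then propagates this value across every real non-exceptional character on the same side of the exceptional vertex as $X$. For a real non-exceptional $Y$ lying on the opposite side, I would rerun the argument with a real-stem edge $\theta'$ incident to the exceptional vertex on that side, obtaining a common value $\varepsilon_{\theta'}$ for the real indicators there. Since $\varepsilon_{\theta'}$ must also equal $\epsilon(\chi_0)$ for any real exceptional $\chi_0$, and we have already forced $\epsilon(\chi_0)=\varepsilon_\theta$, the two sides match and $\epsilon(Y)=\varepsilon_\theta$. I do not anticipate a real obstacle; the only point requiring attention is the trivial verification that, whenever real non-exceptional characters exist on a given side of the exceptional vertex, there is a real-stem edge incident to the exceptional vertex on that side, which is immediate from the line-segment structure of the real-stem.
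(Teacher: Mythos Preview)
Your proof is correct and is exactly the ``similar argument'' the paper alludes to: you walk along the real-stem through the exceptional vertex, using the Thompson--Willems result on each incident edge (where all exceptional characters have decomposition number $1$) to force a common indicator, and then invoke part~(ii) to propagate it along each side. This matches the paper's intended approach.
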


Notice that if $B$ is the principal $p$-block of a group with a cyclic Sylow $p$-subgroup, and $B$ has an irreducible character with F-S indicator $-1$ (e.g. the principal $7$-block of $\op{U}(3,3)$) then the lemma implies that $B$ has no real exceptional characters.

\section{The exceptional characters}\label{S:exceptional}

We outline some results from \cite{D} using the language of subpairs. See \cite[Chapter 5.9]{NT} for a full description of the theory. We then prove results about the local blocks in $B$, in Proposition \ref{P:real}, and the exceptional characters in $B$, in Proposition  \ref{P:exceptional}. This allows us to prove parts (i), (iii) and (iv) of Theorem \ref{T:main}.

Recall that $B$ is a $p$-block with a non-trivial cyclic defect group $D$. Write $|D|=p^a$, where $a>0$, and let $1\subset D_{a-1}\subset D_{a-2}\subset\dots\subset D_1\subset D_0=D$ be the complete list of subgroups of $D$. So $[D:D_i]=p^i$, for $i=0,\dots,a-1$. Set $C_i=C_G(D_i)$ and $N_i=N_G(D_i)$. So $C_0\subseteq C_1\subseteq\ldots\subseteq C_{a-1}$, and $N_0\subseteq N_1\subseteq\ldots\subseteq N_{a-1}$.

As $p$ is odd, $\op{Aut}(D_i)$ is a cyclic group of order $p^{a-i-1}(p-1)$. So $N_i/C_i$ is a cyclic group whose order divides $p^{a-i-1}(p-1)$. Moroever the centralizer of $D_i$ in $\op{Aut}(D)$ has order $p^i$. So $C_i\cap N_0/C_0$ is a cyclic $p$-group. We note that the unique involution in $\op{Aut}(D)$ inverts every element of $D$.

Fix a Sylow $B$-subpair $(D,b_0)$. So $b_0$ is a $p$-block of $C_0$ such that $b_0^G=B$ and the pair $(D,b_0)$ is uniquely determined up to $G$-conjugacy. Set $b_i:=b_0^{C_i}$, for $i=1,\dots,a-1$. Then by \cite[5.9.3]{NT} the lattice of $B$-subpairs contained in $(D,b_0)$ is
\begin{equation}\label{E:all_subpairs}
(1,B)\subset(D_{a-1},b_{a-1})\subset\dots\subset(D_1,b_1)\subset(D,b_0).
\end{equation}

Set ${E:=\op{N}(D,b_0)}$, the stabilizer of $b_0$ in $N_0$. Then ${e:=|E:C_0|}$ is called the inertial index of $B$. Now ${p\not\hspace{.1cm}\mid e}$, by Brauer's extended first main theorem. So $e\mid(p-1)$. Let $x\in E$. Then $D_i^x=D_i$. As $(D_i,b_i),(D_i,b_i^x)\subseteq(D,b_0)$, it follows from \eqref{E:all_subpairs} that $b_i^x=b_i$. So $EC_i\subseteq\op{N}(D_i,b_i)$. Conversely let $n\in\op{N}(D_i,b_i)$. As $(D,b_0)$ and $(D,b_0)^n$ are Sylow $b_i$-subpairs (in the group $C_i$), there is $c\in C_i$ such that $nc_i\in E$. This shows that $\op{N}(D_i,b_i)\subseteq EC_i$. This recovers Dade's observation that $\op{N}(D_i,b_i)=EC_i$.

Now $E\cap C_i/C_0$ is a subgroup of $C_i\cap N_0/C_0$ and a quotient of the group $E/C_0$. As $C_i\cap N_0/C_0$ is a $p$-group and $E/C_0$ has $p'$-order, we deduce that $E\cap C_i=C_0$. It follows from this $EC_i/C_i\cong E/C_0$, and in particular $|EC_i:C_i|=e$.

By \cite[Theorem 1, Part 1]{D} $B$ has $e$ irreducible Brauer characters, listed as $\chi_1,\dots,\chi_e$. Each $b_i$ has inertial index $1$. So $b_i$ has a unique irreducible Brauer character, denoted $\varphi_i$. 

From the above discussion there are $|N_i:EC_i|=\frac{|N_i:C_i|}{e}$ distinct blocks of\/ $C_i$ which induce to $B$, namely $b_i^\tau$ as $\tau$ ranges over $N_i/EC_i$. Also there are $\frac{p^{a-i}-p^{a-i-1}}{|N_i:C_i|}$ conjugacy classes of\/ $G$ which contain a generator of\/ $D_i$. So $B$ has $\frac{p^{a-i}-p^{a-i-1}}{e}$ subsections $(x,b)$, with $D_i=\langle x\rangle$. A consequence of Brauer's second main theorem \cite[5.4.13(ii)]{NT} is that the number of irreducible characters in a block equals the number of columns in the block.

\begin{Lemma}\label{L:columns}
A complete set of columns of\/ $B$ is
$$
(1,\chi_1),\dots,(1,\chi_e),\qquad(x_i^{\sigma_i},\varphi_i^{n_i}),\quad i=0,\dots,a-1.
$$
Here $x_i$ is a fixed generator of $D_i$, $\sigma_i$ ranges over a set of representatives for the cosets of the image of\/ $N_i/C_i$ in $\op{Aut}(D_{a-i})$ and $n_i$ ranges over a set of representatives for the cosets of $EC_i$ in $N_i$. In particular $k(B)=e+\frac{p^a-1}{e}$.
\end{Lemma}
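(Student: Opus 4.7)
The plan is to combine Brauer's second main theorem with the local bookkeeping already assembled just above the lemma. By \cite[5.4.13(ii)]{NT} the columns of $B$ are in bijection with the $G$-conjugacy classes of $B$-subsections $(x,\varphi)$, and there are exactly $k(B)$ of them. Dade has shown $k(B)=e+(p^a-1)/e$, so it will suffice to exhibit a list of subsections of the stated form, check pairwise $G$-inequivalence, and match the count.

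For $x=1$ we have $\op{C}_G(x)=G$ and the only block of $G$ that induces to $B$ is $B$ itself, giving the $e$ columns $(1,\chi_1),\ldots,(1,\chi_e)$. For $x\ne 1$, any $B$-subpair $(\langle x\rangle,b)$ must sit inside a Sylow $B$-subpair; up to $G$-conjugacy we may take $(\langle x\rangle,b)\subseteq(D,b_0)$, so by the subpair lattice \eqref{E:all_subpairs} we have $\langle x\rangle=D_i$ for some $i\in\{0,\ldots,a-1\}$ and $b$ is $N_i$-conjugate to $b_i$. By $\op{N}(D_i,b_i)=EC_i$, recorded above the lemma, there are $|N_i:EC_i|=|N_i:C_i|/e$ distinct blocks $b_i^{n_i}$ of $C_i$ inducing to $B$; each has inertial index $1$, so each supports a single irreducible Brauer character $\varphi_i^{n_i}$. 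Meanwhile $N_i/C_i$ embeds into $\op{Aut}(D_i)$, which acts regularly on the $p^{a-i-1}(p-1)$ generators of $D_i$, so the $G$-conjugacy classes of generators of $D_i$ are indexed by the cosets of the image of $N_i/C_i$ in $\op{Aut}(D_i)$, and can be represented by the elements $x_i^{\sigma_i}$.

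Multiplying these two counts, the subsections with first coordinate of order $p^{a-i}$ contribute $p^{a-i-1}(p-1)/e$ columns. Summing over $i$ and adding the $e$ columns from $x=1$ gives
$$
e+\sum_{i=0}^{a-1}\frac{p^{a-i-1}(p-1)}{e}=e+\frac{p-1}{e}\cdot\frac{p^a-1}{p-1}=e+\frac{p^a-1}{e},
$$
which equals $k(B)$. Since the listed pairs are manifestly $G$-inequivalent (distinct $D_i$'s give distinct first coordinates, then distinct $N_i$-orbits on the generators of a given $D_i$, and distinct $N_i$-orbits on the blocks over it), this equality forces them to exhaust all columns and simultaneously yields the stated formula for $k(B)$. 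The main non-routine step is the appeal to Sylow theory for subpairs that normalises an arbitrary $B$-subsection $(x,\varphi)$ with $x$ non-trivial into the form $\langle x\rangle=D_i$ and $\varphi=\varphi_i^{n_i}$; everything else follows from the regularity of $\op{Aut}$ of a cyclic group on its generators together with Dade's inertial-index-one result for the blocks $b_i$.
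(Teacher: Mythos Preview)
Your proposal is correct and follows essentially the same route as the paper. The paper's argument is the brief paragraph immediately preceding the lemma: it counts, for each $i$, the $|N_i:EC_i|=|N_i:C_i|/e$ blocks of $C_i$ inducing to $B$ and the $(p^{a-i}-p^{a-i-1})/|N_i:C_i|$ $G$-classes of generators of $D_i$, then invokes \cite[5.4.13(ii)]{NT} to match the total against $k(B)$. You have supplied the same ingredients with slightly more detail (the subpair Sylow argument normalising an arbitrary subsection, and the explicit geometric sum), so there is no substantive difference in approach.
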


Let $\Lambda$ be a set of representatives for the $\frac{p^a-1}{e}$ orbits of $E$ on $\op{Irr}(D)^\times$. Then
\begin{equation}\label{E:exceptional}
\op{Irr}(B)=\{X_1,\dots,X_e\}\,\bigcup\,\{X_\lambda\mid\lambda\in\Lambda\}.
\end{equation}
Also set $X_0:=\sum_{\lambda\in\Lambda}X_\lambda$. Dade refers to the $X_\lambda$ as the exceptional characters of $B$.

Notice that as $\ell(b_i)=1$, $b_i$ is real if and only if $\varphi_i$ is real. The next two propositions are relatively elementary.

\begin{Proposition}\label{P:real}
All the blocks $b_0,b_1,\dots,b_{a-1}$ are real or none of them are real.
\begin{proof}
We have $(b_i^o)^G=B^o=B$. So $(D,b_0)$ and $(D,b_o^o)$ are Sylow $B$-subpairs, and there is $n\in\op N_0$ such that $b_0^o=b_0^n$.

Suppose that $b_j$ is real, for some $j=0,\dots,a-1$. As $(D_j,b_j^n),(D_j,b_j^o)\subset (D_0,b_0^o)$, it follows from \eqref{E:all_subpairs} that $b_j^n=b_j^o=b_j$. So $n\in\op{N}(D_j,b_j)=EC_j$. Write $n=ec$, where $e\in E$ and $c\in C_j$. Then $c=e^{-1}n\in C_j\cap N_0$ and $b_0^c=b_0^n=b_0^o$. So $c^2\in C_j\cap E=C_0$. But $C_j\cap N_0/C_0$ has odd order, as it is a $p$-group. So $c\in C_0$, which shows that $n\in E$. As $b_0^n=b_0$, it follows that $b_0$ is real.

Now let $i=0,\dots,a-1$. Then $(D_i,b_i),(D_i,b_i^o)\subset(D_0,b_0)=(D_0,b_0^o)$. So $b_i=b_i^o$, for $i=0,\dots,a-1$, using \eqref{E:all_subpairs}. This shows that all $b_0,\dots,b_{a-1}$ are real.
\end{proof}
\end{Proposition}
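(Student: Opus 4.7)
The plan is to establish the proposition in two stages via the subpair lattice \eqref{E:all_subpairs}. First I show that if any single $b_j$ is real then $b_0$ is real; second I show that if $b_0$ is real then every $b_i$ is real. The downward propagation is the easier half: applying complex conjugation $o$ to the chain $(D_i,b_i)\subset(D,b_0)$ keeps each $D_i$ fixed (subgroups are self-conjugate), so assuming $b_0^o=b_0$ gives $(D_i,b_i^o)\subset(D,b_0)$, and the uniqueness clause of \eqref{E:all_subpairs} then forces $b_i^o=b_i$ for every $i$.

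For the upward direction I use that $B^o=B$ to see that $(D,b_0^o)$ is a Sylow $B$-subpair; the Sylow subpair theorem then produces $n\in N_0$ with $b_0^o=b_0^n$. Assuming $b_j^o=b_j$ for some $j$, the same subpair-uniqueness argument applied inside $(D,b_0^o)$ shows $n$ stabilizes $(D_j,b_j)$, so $n\in\op{N}(D_j,b_j)=EC_j$. Writing $n=ec$ with $e\in E$ and $c\in C_j\cap N_0$, the identity $b_0^c=b_0^n=b_0^o$, combined with a second application of $o$ (using that $o$ commutes with group conjugation and is an involution), yields $b_0^{c^2}=b_0$, i.e.\ $c^2\in E\cap C_j$. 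Since $E/C_0$ has $p'$-order while $C_j\cap N_0/C_0$ is a $p$-group, one has $E\cap C_j=C_0$, so $c^2\in C_0$. The quotient $C_j\cap N_0/C_0$ having odd order, we may take a square root to conclude $c\in C_0$, hence $n\in E$ and $b_0^o=b_0^n=b_0$.

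The main technical hurdle is juggling two commuting actions on the subpair lattice---complex conjugation $o$ and $N_0$-conjugation---and extracting from them, via uniqueness in \eqref{E:all_subpairs}, the single element $n$ that encodes realness of $B$. What ultimately powers the argument is the coprimality of $|E/C_0|$ and the order of the $p$-quotient $C_j\cap N_0/C_0$; this is exactly what allows the ``square-root'' step $c^2\in C_0\Rightarrow c\in C_0$, and it is the reason the whole scheme depends on $p\ne2$.
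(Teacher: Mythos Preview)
Your proposal is correct and follows essentially the same approach as the paper's own proof: both halves---reducing to $b_0$ via $n\in N_0$ with $b_0^o=b_0^n$, then propagating realness of $b_0$ down the subpair chain---appear in the paper in the same form, only with the order of the two halves reversed. Your explicit justification of $b_0^{c^2}=b_0$ via the commutation of $o$ with conjugation is a welcome elaboration of the paper's terse ``So $c^2\in C_j\cap E=C_0$.''
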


We showed in \cite[1.1]{M3} that the number of real irreducible characters in a block equals the number of real columns in the block. Here $(x,\varphi)$ is real if $x^g=x^{-1}$ and $\varphi^g=\ov\varphi$, for some $g\in G$. 

Let $i=0,\dots,a-1$. As $b_i$ has inertial index $1$, it has $|D|$ irreducible characters. Modifying \cite[p26]{D} we use the notation
\begin{equation}\label{E:bi}
\op{Irr}(b_i)=\{X_{i,\lambda}'\mid\lambda\in\op{Irr}(D)\}.
\end{equation}
Here $X_{i,1}'$ is the unique non-exceptional character in $b_i$, and all characters $X_{i,\lambda}'$ with $\lambda\ne1$ are exceptional. Suppose that $b_i$ is real. The columns of $b_i$ are $(d,\varphi_i)$, for $d\in D$. As $C_i$ acts trivially on the columns, the only real column is $(1,\varphi_i)$. So $X_{i,1}'$ is the only real irreducible character in $b_i$.

We will refine the next result in part (i) of Theorem \ref{T:main}:

\begin{Proposition}\label{P:exceptional}
All exceptional characters in $B$ are real or none are real.
\begin{proof}
It follows from Corollary \ref{C:number_real_non_exceptional} and Lemma \ref{L:columns} that the number of real exceptional characters in $B$ equals the number of real columns $(x,\varphi)$ with $x\in D^\times$ and $\varphi\in\op{IBr}(\op{C}_G(x))$.

Suppose that $B$ has a real exceptional character, and let $(x,\varphi)$ be a real column of $B$, with $x\in D^\times$. Then $\langle x\rangle=D_i$, for some $i=0,\dots,a-1$. As $N_i/C_i$ is abelian, the columns $(x',\varphi_i^{n_i})$ are real, for all generators $x'$ of $D_i$ and all $n_i\in N_i$. In particular $(x_i,\varphi_i)$ is a real column. Choose $n\in N_i$ such that $x_i^n=x_i^{-1}$ and $\varphi_i^n=\ov\varphi_i$. We may suppose that $n^2\in C_i$.

Suppose first that $b_i$ is real. As $\varphi_i=\ov\varphi_i$, $n$ fixes $\varphi_i$ and inverts $D_i$. So $nC_i$ is an involution in $EC_i/C_i$. As $EC_i/C_i\cong E/C_0$, we may assume without loss that $nC_0$ is an involution in $E/C_0$. Now all the blocks $b_0,\dots,b_{a-1}$ are real. Hence all $\varphi_0,\dots,\varphi_{a-1}$ are real. As $n$ inverts $D_j$ and fixes $\varphi_j$, all columns $(x_j,\varphi_j)$ are real. Thus all columns $(x,\varphi)$, with $x\in D^\times$, are real. So all exceptional characters in $B$ are real in this case.

Conversely, suppose that $b_i$ is not real. As $nC_i$ is the unique involution in $N_i/C_i$, but $n\not\in EC_i$, it follows that $|EC_i:C_i|=e$  is odd. Now $(D,b_0)$ and $(D,b_0^o)$ are Sylow $B$-subpairs, but $b_0\ne b_0^o$. So there is $m\in N_0\backslash E$ such that $b_0^m=b_0^o$. As $m^2\in E$ and $|E:C_0|$ is odd, we may choose $m$ so that $m^2\in C_0$. Then $mC_0$ is the unique involution in $N_0/C_0$. In particular $m$ inverts every element of $D$. Let $j=0,\dots,a-1$. Then $(D_j,b_j^m)$ and $(D_j,b_j^o)$ are $B$-subpairs contained in $(D,b_0^o)$. So $b_j^m=b_j^o$ and thus $(d_j,\varphi_j)^m=(d_j^{-1},\ov\varphi_j)$. It follows that all exceptional characters in $B$ are real in this case also.
\end{proof}
\end{Proposition}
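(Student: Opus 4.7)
The plan is to translate the statement into a claim about real columns. By \cite[1.1]{M3} the number of real exceptional characters in $B$ equals the number of real columns $(x,\varphi)$ of $B$ with $x\in D^\times$, enumerated via Lemma \ref{L:columns} as $(x_i^{\sigma_i},\varphi_i^{n_i})$. A short abelianness argument using $N_i/C_i$ shows that once one column at level $i$ is real, every column at level $i$ is real: an inverting element $g\in N_i$ acts on $D_i$ by inversion (the unique order-$2$ automorphism), and after conjugating by $n_i^{-1}n^{-1}gnn_i$ one produces an element realizing every other column at that level.

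So the task reduces to showing that if $(x_i,\varphi_i)$ is a real column for one $i$, then $(x_j,\varphi_j)$ is a real column for every $j$. I would pick $n\in N_i$ with $x_i^n=x_i^{-1}$, $\varphi_i^n=\ov\varphi_i$, and $n^2\in C_i$. Since $N_i/C_i$ embeds into the cyclic group $\op{Aut}(D_i)$, the coset $nC_i$ is the unique involution of $N_i/C_i$. The argument then splits on whether $b_i$ is real, with Proposition \ref{P:real} propagating this condition across all levels.

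In the case $b_i$ real, $\varphi_i^n=\varphi_i$ forces $n\in\op{N}(D_i,b_i)=EC_i$, so after modification by $C_i$ I may take $n\in E$. The coset $nC_0$ is then the unique involution of $E/C_0$, and under $E/C_0\hookrightarrow\op{Aut}(D)$ lifts to inversion of all of $D$; since $n\in E$ stabilizes every block $b_j$ and every $\varphi_j$ is real by Proposition \ref{P:real}, each $(x_j,\varphi_j)$ is a real column. In the non-real case, $n\notin EC_i$ forces $e=|EC_i:C_i|$ to be odd (otherwise $EC_i/C_i$ would contain the unique involution of $N_i/C_i$). I would then choose $m\in N_0\setminus E$ with $b_0^m=b_0^o$, adjusting within the $E$-coset to arrange $m^2\in C_0$ (possible because squaring is surjective on the odd-order cyclic $E/C_0$). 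Such an $m$ is a lift of the unique involution of $N_0/C_0$, hence inverts $D$. For each $j$, the $B$-subpairs $(D_j,b_j^m)$ and $(D_j,b_j^o)$ both lie inside $(D,b_0^o)$, so \eqref{E:all_subpairs} forces $b_j^m=b_j^o$, i.e.\ $\varphi_j^m=\ov\varphi_j$; the column $(x_j,\varphi_j)$ is real.

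The main obstacle I anticipate is producing this single element of $N_0$ that simultaneously inverts $D$ and conjugates each $\varphi_j$ to $\ov\varphi_j$. The parity conditions ($|E:C_0|$ odd in the non-real case, together with $C_i\cap N_0/C_0$ being a $p$-group of odd order) are precisely what allow $n$ or $m$ to be modified into its canonical coset representative, and the uniqueness of subpair chains \eqref{E:all_subpairs} is the glue that transfers information from level $i$ to every other level $j$.
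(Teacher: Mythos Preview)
Your proposal is correct and follows essentially the same route as the paper: reduce to real columns, use the abelianness of $N_i/C_i$ to spread reality across level $i$, then split on whether $b_i$ is real and in each case produce an element of $N_0$ that inverts $D$ and sends each $b_j$ to $b_j^o$, invoking Proposition~\ref{P:real} and the subpair chain \eqref{E:all_subpairs} exactly as the paper does. The only small omission is that your opening count also needs Corollary~\ref{C:number_real_non_exceptional} (to match the real columns with $x=1$ against the real non-exceptional characters), which the paper cites explicitly alongside \cite[1.1]{M3} and Lemma~\ref{L:columns}.
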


Examination of the proof shows that:

\begin{Corollary}\label{C:real_exceptional}
All exceptional characters in $B$ are real if and only if $b_0$ is real and $e$ is even, or $b_0$ is not real and $e$ is odd.
\end{Corollary}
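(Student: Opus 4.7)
The plan is to harvest the corollary directly from the proof of Proposition \ref{P:exceptional}, which has already performed the relevant case split according as $b_0$ is real or not. I would re-run each case and track the parity of $e=|E:C_0|$ produced (for necessity) or required (for sufficiency), rather than introduce any substantially new argument.

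For the necessity direction, assume $B$ has a real exceptional character. Then by Corollary \ref{C:number_real_non_exceptional} and Lemma \ref{L:columns} some column $(x,\varphi)$ with $x\in D^\times$ is real. With $\langle x\rangle=D_i$, the construction in the proof of Proposition \ref{P:exceptional} produces $n\in N_i$ with $n^2\in C_i$, $x_i^n=x_i^{-1}$, and $\varphi_i^n=\overline{\varphi_i}$. If $b_0$ is real, Proposition \ref{P:real} forces $\varphi_i$ to be real, so $n$ fixes $\varphi_i$ and therefore represents an involution in $EC_i/C_i\cong E/C_0$; hence $e$ is even. If $b_0$ is not real, then $\varphi_i\ne\overline{\varphi_i}$ puts $n$ outside $EC_i$, while $nC_i$ is still the unique involution of the cyclic group $N_i/C_i$; since this involution lies outside the subgroup $EC_i/C_i$, the order $|EC_i:C_i|=e$ must be odd.

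For the sufficiency direction, I would, in each of the two permitted parity configurations, exhibit an element of $N_0$ that realises complex conjugation on every column $(x_j,\varphi_j)$ attached to $D^\times$. If $b_0$ is real and $e$ is even, then $E/C_0$ is a subgroup of even order of the cyclic group $N_0/C_0\hookrightarrow\op{Aut}(D)$, and so contains the unique involution of $\op{Aut}(D)$; a preimage $n\in E$, adjusted so that $n^2\in C_0$, inverts $D$ and, lying in $E$, fixes each (real) $\varphi_j$, making each column $(x_j,\varphi_j)$ real. If $b_0$ is not real and $e$ is odd, the proof of Proposition \ref{P:exceptional} already furnishes $m\in N_0\setminus E$ with $m^2\in C_0$, inverting $D$ and sending each $b_j$ to $\overline{b_j}$. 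In either scenario every column of $B$ attached to a non-trivial subgroup of $D$ is real, so Lemma \ref{L:columns} together with Corollary \ref{C:number_real_non_exceptional} (or the column-counting result of \cite{M3} cited before Proposition \ref{P:exceptional}) gives that every exceptional character of $B$ is real.

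The only delicate point I expect is the identification, in the $b_0$-real and $e$-even case, of the involution of $E/C_0$ with the unique involution of $\op{Aut}(D)$ that inverts $D$; this is immediate because $N_0/C_0$ embeds into the cyclic group $\op{Aut}(D)$, and any subgroup of even order of a cyclic group contains its unique involution. Everything else is a careful rereading of Proposition \ref{P:exceptional}.
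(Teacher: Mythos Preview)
Your proposal is correct and is essentially the explicit version of what the paper does: the paper states the corollary immediately after Proposition \ref{P:exceptional} with nothing more than ``Examination of the proof shows that:'', and your argument is precisely that examination carried out in detail. Your extraction of the parity of $e$ in each of the two cases of Proposition \ref{P:exceptional}, and your converse construction of the inverting element in $E$ (respectively in $N_0\setminus E$), match the paper's reasoning exactly.
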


We need some additional notation. Set $\Lambda_u:=\{\lambda\in\Lambda\mid\op{ker}(\lambda)=D_u\}$, for $u=1,\dots,a$. So $|\Lambda_u|=\frac{p^u-p^{u-1}}{e}$. Now choose $\lambda\in\Lambda_u$ and set
$$
\epsilon_u:=\epsilon(X_\lambda).
$$
Note that $X_\lambda$ and $X_\mu$ are Galois conjugates, for all $\lambda,\mu\in\Lambda_u$ (this follows from \cite[part 2 of Theorem 1 and Corollary 1.9]{D}). So $\epsilon_u$ does not depend on $\lambda$.

Recall our notation \eqref{E:bi} for the irreducible characters $X_{i,\lambda}'$ in $b_i$. As already noted, $X_{i,1}'$ is the only possible real irreducible character in $b_i$. We set
$$
\nu_i:=\epsilon(X_{i,1}'),\quad\mbox{for $i=0,\dots,a-1$.}
$$

Now let $i=0,\dots,a-1$ and choose $x\in D_i-D_{i+1}$ and $\rho\in N_i$. According to \cite[Theorem 1, Part 3]{D} there are signs $\varepsilon_0',\varepsilon_0,\varepsilon_1\dots,\varepsilon_e$ and $\gamma_i$ such that  
$$
\begin{array}{lcllcl}
d_{X_\lambda,\varphi_i^\rho}^{(x)}&=&\varepsilon_0\gamma_i\sum\limits_{\tau\in EC_i/C_i}\lambda({^{\rho\tau}}x),
&\quad
d_{X_j,\varphi_i^\rho}^{(x)}&=&\varepsilon_j\gamma_i,\quad\mbox{for $j=1,\dots,e$}\\
d_{X_{i,\lambda}',\varphi_i^\rho}^{(x)}&=&\varepsilon_0'\gamma_i\lambda({^{\rho}}x),
&\quad
d_{X_{i,1}',\varphi_i^\rho}^{(x)}&=&1
\end{array}
$$
Here $EC_i/C_i$ is a set of representatives for the cosets of $C_i$ in $EC_i$. Note that Feit uses the notation $\delta_0=-\varepsilon_0$ and $\delta_j=\varepsilon_j$, for $j=1,\dots,e$. Now let $i=0,\dots,a-1$ and $x\in D_i-D_{i+1}$. Then it follows from \cite[Corollary 1.9]{D} that $X_j(x)=|N_i:EC_i|\varphi_i(1)\delta_j\gamma_i$. So $\delta_j\gamma_i$ is the sign of the integer $X_j(x)$.

There is a nice relationship between the signs $\varepsilon_0,\varepsilon_1\dots,\varepsilon_e$ and the Brauer tree of $B$. Suppose that $j$ and $k$ are adjacent vertices in the Brauer tree. Then $X_j+X_k$ is a principal indecomposable character of $G$. So it vanishes on $D^\times$, and hence $\delta_j+\delta_k=0$ (see \cite[V11, Section 9]{F}). So suppose that there are $d_j$ edges between the vertex $j$ and the exceptional vertex $0$ in the Brauer tree. Then $\delta_j=(-1)^{d_j}\delta_0$. So $\varepsilon_j=(-1)^{d_j-1}\varepsilon_0$, for $j=1,\dots,e$.

We now prove part (i) of our main theorem. But note that this proof does not depend on Propositions \ref{P:real} and \ref{P:exceptional}:

\begin{proof}[Proof of part (i) of Theorem \ref{T:main}]
Applying \eqref{E:BrauerIII}, with $\rho\in N_i$ and $x\in D_i-D_{i+1}$, we get
$$
\sum_{j=1}^e\epsilon(X_j)\varepsilon_j\gamma_i+\sum_{\lambda\in\Lambda}\epsilon(X_\lambda)\varepsilon_0\gamma_i\sum_{\tau\in EC_i/C_i}\lambda({^{\rho\tau}}x)=\nu_i.
$$
Now set $\sigma:=\varepsilon_0\sum_{j=1}^e\epsilon(X_j)\varepsilon_j$. So $\sigma$ is independent of $i$, $\rho$ and $x$. Then the above equality transforms to
$$
\sum_{u=1}^{a}\epsilon_u\sum_{\lambda\in\Lambda_u}\sum_{\tau\in EC_i}\lambda({^{\rho\tau}}x) = \varepsilon_0\gamma_i\nu_i - \sigma,
$$
where the right hand side is independent of $\rho$ and $x$. Let $\rho$ range over a set of representatives for the $\frac{|N_i:C_i|}{e}$ cosets of $EC_i$ in $N_i$ and let $x$ range over a set of representatives for the $\frac{p^{a-i}-p^{a-i-1}}{|N_i:C_i|}$ orbits of $N_i$ on the generators of $D_i$. Then ${^{\rho\tau}}x$ will range over all generators of $D_i$. Summing the resulting equalities gives
$$
\sum_{u=1}^{a}\epsilon_u\sum_{\lambda\in\Lambda_u}\sum_{x\in D_i-D_{i+1}}\lambda(x)=
\left(\frac{p^{a-i}-p^{a-i-1}}{e}\right)\left(\varepsilon_0\gamma_i\nu_i-\sigma\right).
$$
We use $|\Lambda_u|=\frac{p^u-p^{u-1}}{e}$ and Lemma \ref{L:gen_sum} to transform this equality to
$$
(p^{a-i}-p^{a-i-1})\sum_{u=1}^{i}\frac{p^u-p^{u-1}}{e}\epsilon_u-p^{a-i-1}\frac{p^{i+1}-p^i}{e}\epsilon_{i+1}=\frac{p^{a-i}-p^{a-i-1}}{e}(\varepsilon_0\gamma_i\nu_i-\sigma).
$$
After cancelling the factor $\frac{p^{a-i-1}(p-1)}{e}$, we get
\begin{equation}\label{E:equality}
\sum_{u=1}^{i}(p^u-p^{u-1})\epsilon_u-p^i\epsilon_{i+1}=\varepsilon_0\gamma_i\nu_i-\sigma.
\end{equation}
Here $\sum_{u=1}^0(p^u-p^{u-1})\epsilon_u$ is taken to be $0$, when $i=0$. We write down the equalities \eqref{E:equality} for $i=0,1,2,\dots$ in turn:
\begin{equation}\label{E:i=0,...,a-1}
\begin{aligned}
& -\epsilon_1&=&\quad\varepsilon_0\gamma_0\nu_0-\sigma\\
(p-1)\epsilon_1& -p\epsilon_2&=&\quad\varepsilon_0\gamma_1\nu_1-\sigma\\
(p-1)\epsilon_1+(p^2-p)\epsilon_2& -p^2\epsilon_3&=&\quad\varepsilon_0\gamma_2\nu_2-\sigma\\
(p-1)\epsilon_1+(p^2-p)\epsilon_2+(p^3-p^2)\epsilon_3&-p^3\epsilon_4&=&\quad\varepsilon_0\gamma_3\nu_3-\sigma\\
\vdots\\
(p-1)\epsilon_1+(p^2-p)\epsilon_2+\dots+(p^{a-1}-p^{a-2})\epsilon_{a-1}&-p^{a-1}\epsilon_a&=&\quad\varepsilon_0\gamma_{a-1}\nu_{a-1}-\sigma\\
\end{aligned}
\end{equation}
Subtract the first equality from the second to get
$$
p(\epsilon_1-\epsilon_2)=\varepsilon_0(\gamma_1\nu_1-\gamma_0\nu_0).
$$
The left hand side equals $-p,0$ or $p$ and the right hand equals $-2,0$ or $2$. As $p$ is odd, the common value is $0$. So $\epsilon_2=\epsilon_1$ and $\gamma_1\nu_1=\gamma_0\nu_0$. Substitute these values back into all equations in \eqref{E:i=0,...,a-1}. Now subtract the first from the third equality to get
$$
p^2(\epsilon_1-\epsilon_3)=\varepsilon_0(\gamma_2\nu_2-\gamma_0\nu_0).
$$
Once again both sides are $0$. So $\gamma_2\nu_2=\gamma_0\nu_0$ and $\epsilon_3=\epsilon_1$. Proceeding in this way, we get
$$
\epsilon_1=\epsilon_2=\dots=\epsilon_a,\quad \gamma_0\nu_0=\gamma_1\nu_1=\dots=\gamma_{a-1}\nu_{a-1}.
$$
\end{proof}

Following the above proof, and the discussion before the proof, we obtain:

\begin{Corollary}
Suppose that $b_0$ is real and let $D=\langle x\rangle$. Then for each $i=0,\dots,a-1$ and $j=0,\dots,e$, the integer $X_j(x^{p^i})X_j(x)$ has sign $\epsilon(X_{i,1}')\epsilon(X_{0,1}')$.
\end{Corollary}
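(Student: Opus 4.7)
The plan is to combine Dade's character-value formula (recorded just before the proof of Theorem \ref{T:main}(i)) with the sign chain established at the end of that proof. Let $x$ generate $D=D_0$ and fix $i\in\{0,\dots,a-1\}$; then $x^{p^i}$ generates $D_i$ and lies in $D_i - D_{i+1}$. For $j=1,\dots,e$, Dade's formula applied at $x$ (i.e.\ with subscript $0$) and at $x^{p^i}$ yields
$$
X_j(x)=|N_0:EC_0|\,\varphi_0(1)\,\delta_j\gamma_0,\qquad X_j(x^{p^i})=|N_i:EC_i|\,\varphi_i(1)\,\delta_j\gamma_i.
$$
Since $\delta_j^2=1$ and the remaining factors are positive integers, the product $X_j(x^{p^i})X_j(x)$ has sign $\gamma_0\gamma_i$.

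For $j=0$, I first verify that the analogous formula holds for $X_0=\sum_{\lambda\in\Lambda}X_\lambda$ with $\delta_0:=-\varepsilon_0$. Substituting Dade's expression for the generalized decomposition numbers and interchanging the order of summation gives
$$
X_0(x^{p^i})=\varepsilon_0\gamma_i\,\varphi_i(1)\sum_{\rho\in N_i/EC_i}\sum_{\lambda\in\Lambda}\sum_{\tau\in EC_i/C_i}\lambda({}^{\rho\tau}x^{p^i}).
$$
Because every $E$-orbit on $\op{Irr}(D)^\times$ has size $e$, as $\tau$ varies over $EC_i/C_i\cong E/C_0$ the character ${}^{\tau^{-1}}\lambda$ sweeps the $E$-orbit of $\lambda$ exactly once; further summing over $\lambda\in\Lambda$ converts the inner double sum into $\sum_{\mu\in\op{Irr}(D)^\times}\mu({}^\rho x^{p^i})$. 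Since $x^{p^i}\ne1$, orthogonality on $D$ makes this equal to $-1$, giving $X_0(x^{p^i})=-\varepsilon_0\gamma_i\,\varphi_i(1)\,|N_i:EC_i|$. This confirms the formula for $j=0$, whence $X_0(x^{p^i})X_0(x)$ also has sign $\gamma_0\gamma_i$.

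To finish, the hypothesis that $b_0$ is real forces every $b_i$ to be real by Proposition \ref{P:real}, so each $\nu_i\in\{\pm1\}$. The identities $\gamma_0\nu_0=\gamma_1\nu_1=\dots=\gamma_{a-1}\nu_{a-1}$ obtained at the end of the argument for Theorem \ref{T:main}(i) then give
$$
\gamma_0\gamma_i = \gamma_0^2\,\nu_0\nu_i = \nu_0\nu_i = \epsilon(X_{0,1}')\,\epsilon(X_{i,1}'),
$$
which is the asserted sign. The only step with any genuine content is the $j=0$ case, where Dade's explicit formula must be extended from the non-exceptional characters to the aggregate exceptional character $X_0$ via the short orthogonality computation above; the rest is a direct assembly of Proposition \ref{P:real} and the sign chain already produced in the proof of Theorem \ref{T:main}(i).
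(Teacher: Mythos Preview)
Your argument is correct and follows essentially the same route as the paper, which simply remarks that the Corollary follows from the discussion preceding the proof of Theorem~\ref{T:main}(i) (the formula $X_j(x)=|N_i:EC_i|\,\varphi_i(1)\,\delta_j\gamma_i$ for $j=0,\dots,e$, cited from Dade) together with the identities $\gamma_0\nu_0=\cdots=\gamma_{a-1}\nu_{a-1}$ established in that proof. The only difference is that you rederive the $j=0$ case of Dade's formula by the orthogonality computation, whereas the paper takes it as part of the cited result; one small point worth making explicit in your write-up is that the coset representatives for $EC_i/C_i$ may be chosen in $E\subseteq N_0$, so that ${}^{\tau^{-1}}\lambda$ is genuinely a character of $D$ and the commutation ${}^{\rho\tau}x^{p^i}={}^{\tau\rho}x^{p^i}$ holds because $N_i/C_i$ is abelian.
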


\medskip
There is no apparent relationship between the F-S indicators $\nu_0,\dots,\nu_{a-1}$:

{\bf Example:} The $2$-nilpotent group $G=\langle a, b, c\mid a^4, a^2=b^2, a^b=a^{-1}, c^9, a^c=b, b^c=ab\rangle$ has isomorphism type $3.\op{SL}(2,3)$. Set $D=\langle c\rangle$. Then $D$ is cyclic of order $9$, with $C_0=D\times\langle a^2\rangle$ and $C_1=G$. Let $\theta$ be the non-trivial irreducible character of $C_0/D$, and let $b_0$ be the $3$-block of $C_0$ which contains $\theta$. Then $\theta=X_{0,1}'$ is the unique non-exceptional character in $b_0$. So $\nu_0=\epsilon(X_{0,1}')=+1$. Set $b_1=b_0^G$. Then $b_1$ also has a unique non-exceptional character $X_{1,1}'$. But now $\nu_1=\epsilon(X_{1,1}')=-1$, as $X_{1,1}'$ restricts to the non-linear irreducible character of $\langle a,b\rangle\cong Q_8$.

This example arises from the fact that the Glauberman correspondence \cite[5.12]{NT} does not preserve the F-S indicators of characters.

\begin{proof}[proof of part (iii) of Theorem \ref{T:main}]
This is an immediate consequence of Lemma \ref{L:main(iii)} and part (i) of Theorem \ref{T:main}. 
\end{proof}

Consider the real-stem of $B$ as a horizontal line segment with $s$ vertices and $s-1$ edges, where $s\geq1$. We label the vertices using an interval $[-\ell,\dots,-2,-1,0,1,2,\dots,r]$ so that $0$ labels the exceptional vertex. Thus $s=r+\ell+1$, and there are $\ell$ real non-exceptional characters on the left of the exceptional vertex, and $r$ on the right (the choice of left and right is unimportant).

As above, $X_0$ is the sum of the exceptional characters in $B$. Now we relabel the non-exceptional characters in $B$ so that $X_i$ is the real non-exceptional character corresponding to vertex $i$, for $i=-\ell,\dots,r$ and $i\ne0$. In view of parts (i) and (ii) of Theorem \ref{T:main} there are signs $\epsilon_\pm$ such that
$$
\epsilon(X_i)=\left\{ 
\begin{array}{ll}
 \epsilon_-,&\quad\mbox{for $i=-\ell,\dots,-1$.}\\
 \epsilon_0,&\quad\mbox{for $i=0$.}\\
 \epsilon_+,&\quad\mbox{for $i=1,\dots,r$.}
\end{array}\right.
$$

Next let $\sigma$ be a generator of $D$. It follows from \cite[Corollary 1.9]{D} that $X_0(\sigma)=-\varepsilon_0\gamma_0|N_0:E|\varphi_0(1)$. So $X_i(\sigma)=(-1)^iX_0(\sigma)$, as $X_i+X_{i+1}$ is a projective character of $G$, for $i=-\ell,\dots,r-1$ (see \cite[VII,2.19(ii)]{F}).

Recall from Section \eqref{S:exceptional} that there are $|N_0:E|$ blocks of $C_0$ which induce to $B$; these are the blocks $b_0^\tau$, where $\tau$ ranges over $N_0/E$. We note also that $X_{0,1}'({^\tau}\sigma)=\varphi_0(1)$. Now \cite[Theorem(4B)]{B} is an immediate consequence of \cite[Theorem(4A)]{B}. In our context, this states that
$$
\sum_{i=-\ell}^r\epsilon(X_i)X_i(\sigma)=|N_0:E|\epsilon(X_{0,1}')X_{0,1}'(\sigma).
$$
In view of the previous paragraph this simplifies to
\begin{equation}\label{E:real-stem}
\sum_{i=1}^{\ell}(-1)^i\epsilon_-+\epsilon_0+\sum_{i=1}^r(-1)^i\epsilon_+=-\varepsilon_0\gamma_0\nu_0.
\end{equation}

We consider a number of cases.

Suppose first that $\epsilon_0\ne0$. Then $\epsilon_-=\epsilon_0=\epsilon_+$, by part (iii) of Theorem \ref{T:main}. So \eqref{E:real-stem} becomes
\begin{equation}\label{E:first}
-\varepsilon_0\gamma_0\nu_0\epsilon_0=
\left\{
\begin{array}{cl}
 (-1)^\ell,&\quad\mbox{if $s$ is odd.}\\
 0,&\quad\mbox{if $s$ is even.}
\end{array}
\right.
\end{equation}
In particular $b_0$ is not real if $s$ is even. As $e$ is odd when $s$ is even, this already follows from Corollary \ref{C:real_exceptional}.

Suppose then that $\epsilon_0=0$. Now \eqref{E:real-stem} evaluates as
\begin{equation}\label{E:second}
-\varepsilon_0\gamma_0\nu_0=
\left\{
\begin{array}{cl}
 \epsilon_-,&\quad\mbox{if $\ell$ is odd and $r$ is even.}\\
 \epsilon_-+\epsilon_+,&\quad\mbox{if $\ell$ and $r$ are both odd.}\\
 \epsilon_+,&\quad\mbox{if $\ell$ is even and $r$ is odd.}\\
 0,&\quad\mbox{if $\ell$ and $r$ are both even.}\\
\end{array}
\right.
\end{equation}

\begin{proof}[proof of part (iv) of Theorem \ref{T:main}]
The hypothesis is that $\epsilon_0=0$, at least one of $\epsilon_-,\epsilon_+$ is not zero and $\ell\equiv r\equiv1\Mod2$. Now $B$ has $e$ non-exceptional characters, of which $\ell+r$ are real-valued. So $e\equiv\ell+r$ is even. Then $b_0$ is not real, according to Corollary \ref{C:real_exceptional}. This in turn implies that $\nu_0=0$. So $\epsilon_-+\epsilon+=0$, according to \eqref{E:second}. We conclude that $\epsilon_-\epsilon_+=-1$, which gives the conclusion of (iv).
\end{proof}

\section{Passing from $B$ to its canonical character}

Let $i=0,\dots,a-1$. Then $N_i$ contains the normalizer $N_0$ of $D$ in $G$. So by Brauer's first main theorem there is a unique $p$-block $B_i$ of $N_i$ such that $B_i^G=B$. As $(B_i^o)^G=B^o=B$, the uniqueness forces $B_i^o=B_i$. Now $B_i$ has defect group $D$ and inertial index $e=|EC_i:C_i|$. So $\ell(B_{a-1})=e$ and $k(B_{a-1})=e+\frac{p^a-1}{e}$. We first consider the block $B_{a-1}$ of the largest subgroup $N_{a-1}$. Following \cite[Section 7]{D}, write
$$
\op{IBr}(B_{a-1})=\{\tilde\chi_1,\dots,\tilde\chi_e\},\quad \op{Irr}(B_{a-1})=\{\tilde X_1,\dots,\tilde X_e\}\,\bigcup\,\{\tilde X_\lambda\mid\lambda\in\Lambda\},
$$
and set $\tilde X_0=\sum\tilde X_\lambda$.

\begin{Proposition}\label{P:Ba-1}
The exceptional characters in $B$ and $B_{a-1}$ have the same F-S indicators.
\end{Proposition}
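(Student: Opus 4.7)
The plan is to repeat, inside $N_{a-1}$ for the block $B_{a-1}$, the derivation used in the proof of part~(i) of Theorem~\ref{T:main}, and then to compare the resulting equations with those already obtained for $B$. The hope is that because both derivations feed on the same local ingredients ($b_0,\ldots,b_{a-1}$), they will produce a common formula for the common exceptional indicator.

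The first step is to verify that Dade's parametrizing data is shared. Since $D_{a-1}$ is the unique subgroup of order $p$ in each $D_i$ with $i\le a-1$, any element of $G$ normalizing $D_i$ also normalizes $D_{a-1}$, whence $N_i\subseteq N_{a-1}$ and $C_G(D_i)=C_{N_{a-1}}(D_i)=C_i$. Consequently the Sylow $B$-subpair $(D,b_0)$ is a Sylow $B_{a-1}$-subpair, the chain \eqref{E:all_subpairs} parametrizes $B_{a-1}$-subpairs inside $N_{a-1}$, the Brauer characters $\varphi_i$ and the non-exceptional characters $X_{i,1}'$ of $b_i$ are common to both setups, and the inertial index $e$ and all indicators $\nu_i$ agree: $\tilde\nu_i=\nu_i$ for $i=0,\ldots,a-1$.

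The second step is to apply \eqref{E:BrauerIII} inside $N_{a-1}$ to $B_{a-1}$ at a subsection $(x,\varphi_i^\rho)$ with $x\in D_i\setminus D_{i+1}$ and $\rho\in N_i$. Expanding via Dade's formulas and running through the same linear algebra as in the proof of part~(i) yields the tilded analogues of the system \eqref{E:i=0,...,a-1}, so $\tilde\epsilon_1=\cdots=\tilde\epsilon_a$, the products $\tilde\gamma_i\nu_i$ are constant in $i$, and in particular
$$\tilde\epsilon_1 \;=\; \tilde\sigma - \tilde\varepsilon_0\tilde\gamma_0\nu_0, \qquad \epsilon_1 \;=\; \sigma - \varepsilon_0\gamma_0\nu_0.$$

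The third and decisive step is to compare these two equalities by applying \eqref{E:BrauerIII} to the \emph{same} subsection $(x,\varphi_i^\rho)$ in $G$ and in $N_{a-1}$. The right-hand sides coincide (both equal $\nu_i$, which depends only on $b_i$), so the two global left-hand sides must agree. Using part~(i) to pull the common value $\epsilon_1$ (resp.\ $\tilde\epsilon_1$) out of the exceptional sum, and using $\sum_{\mu\in\op{Irr}(D)^\times}\mu({}^\rho x)=-1$ for $x\neq1$ to evaluate the exceptional contribution, the identity reduces to a relation between $\sigma-\varepsilon_0\gamma_0\nu_0$ and $\tilde\sigma-\tilde\varepsilon_0\tilde\gamma_0\nu_0$. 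Combining with the first-row equations above forces $\epsilon_1=\tilde\epsilon_1$, which is precisely the claim of the proposition. The main obstacle is the sign bookkeeping in this last step: Dade's signs $\varepsilon_0,\gamma_i$ and their tilded counterparts are defined only up to compatible choices, and one must check that they are calibrated consistently against the common local data $\varphi_i,X_{i,1}'$ so that the two versions of Brauer's formula really yield the same answer, even though the Brauer trees of $B$ and $B_{a-1}$ are typically different.
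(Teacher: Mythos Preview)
Your third step is circular and therefore gives no new information. Applying \eqref{E:BrauerIII} to $B$ and to $B_{a-1}$ at the same subsection $(x,\varphi_i^\rho)$ is precisely how you obtained the two displayed equations $\epsilon_1=\sigma-\varepsilon_0\gamma_0\nu_0$ and $\tilde\epsilon_1=\tilde\sigma-\tilde\varepsilon_0\tilde\gamma_0\nu_0$ in the first place; equating the two left-hand sides just says $\nu_i=\nu_i$. Concretely, after pulling out the common exceptional indicator and using $\sum_{\mu\in\op{Irr}(D)^\times}\mu({}^\rho x)=-1$, the $B$-side of \eqref{E:BrauerIII} collapses to $\gamma_i\varepsilon_0(\sigma-\epsilon_1)$ and the $B_{a-1}$-side to $\tilde\gamma_i\tilde\varepsilon_0(\tilde\sigma-\tilde\epsilon_1)$; both already equal $\nu_i$, so comparing them yields at best $\gamma_i\gamma_0=\tilde\gamma_i\tilde\gamma_0$ when $\nu_0\ne0$, and nothing when $\nu_0=0$. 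This never touches $\epsilon_1$ versus $\tilde\epsilon_1$.

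The underlying obstruction is not ``sign bookkeeping'' but the terms $\sigma$ and $\tilde\sigma$: these are weighted sums over the \emph{non-exceptional} characters of $B$ and of $B_{a-1}$, and those characters are genuinely unrelated. Example~4 in Section~\ref{S:examples} makes this explicit: there $B$ has real non-exceptional characters with indicators $+1$ and $-1$, while $B_{a-1}=B_0$ has no real irreducible characters at all, so $\tilde\sigma=0$ whereas $\sigma$ need not vanish. Your system of equations simply does not link the two blocks.

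The paper supplies that missing link from outside the Brauer-formula machinery: it invokes Dade's isometry $(\tilde X_\lambda-\tilde X_\mu)^G=d(X_\lambda-X_\mu)$ (and its analogue when $|\Lambda|=1$), which relates the exceptional characters of $B_{a-1}$ and $B$ directly via induction. From this one extracts an odd inner product between an induced $\tilde X_\bullet$ and some $X_\bullet$, and then part~(i) of Theorem~\ref{T:main} together with the standard parity principle (an odd multiplicity between real characters forces equal Frobenius--Schur indicator) finishes the argument. Some ingredient of this kind, linking the characters of the two blocks rather than merely their common local data, is unavoidable.
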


\begin{proof}
Suppose first that $|\Lambda|\geq2$. According \cite[(7.2)]{D} there is a sign $d$ such that
$$
(\tilde X_{\lambda}-\tilde X_{\mu})^G=
d(X_{\lambda}-X_{\mu}),\quad\mbox{for all $\lambda,\mu\in\Lambda$.}
$$
It follows that $\langle \tilde X_{\lambda},X_\lambda\rangle$ or $\langle \tilde X_{\mu},X_\lambda\rangle$ is odd. So in view of part (i) of Theorem \ref{T:main}, the conclusion holds in this case.

From now on we suppose that $|\Lambda|=1$. Then $E$ has a single orbit on $\op{Irr}(D)^\times$, which forces $|D|=p$ and $e=p-1$. As $\tilde X_0$ is the unique exceptional character in $B_{a-1}$, it is real valued. Then it follows from part (iii) of Theorem \ref{T:main} that all real irreducible characters in $B_{a-1}$ have the same F-S indicators.

Now by \cite[(7.3), (7.8), first two paragraphs of p40]{D}, there is a sign $\varepsilon_0'$ such that
$$
(\tilde X_0-\sum_{i=1}^{p-1}\tilde X_i){^G}=\varepsilon_0'\sum_{i=0}^{p-1}\varepsilon_i X_i.
$$
Here $\varepsilon_0,\dots,\varepsilon_{p-1}$ are as introduced earlier and $X_0$ can be chosen to be real, as $p$ is odd. Taking inner-products of characters, and reading modulo $2$, we see that $\langle\tilde X_i^G,X_0\rangle$ is odd, for some real $\tilde X_i$. So $\epsilon(\tilde X_i)=\epsilon(X_0)$. Then by the previous paragraph $\epsilon(\tilde X_0)=\epsilon(X_0)$.
\end{proof}

\begin{Proposition}\label{P:B0}
All exceptional characters in $B_0,\dots, B_{a-1}$ and $B$ have the same F-S indicators.
\end{Proposition}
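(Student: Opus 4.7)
The plan is to induct on $a$, where $|D|=p^a$. For the base case $a=1$, we have $N_0=N_{a-1}$ and $B_0=B_{a-1}$, so the claim is precisely Proposition \ref{P:Ba-1}. For the inductive step $a\geq2$, Proposition \ref{P:Ba-1} already matches the F-S indicators of the exceptional characters in $B$ and $B_{a-1}$, so the remaining task is to match $B_0,\ldots,B_{a-2}$ with $B_{a-1}$.

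The key reduction exploits that $D_{a-1}$, as the unique order-$p$ subgroup of the cyclic group $D$, is characteristic in every $D_i$ and hence a normal $p$-subgroup of every $N_i$ for $i\leq a-1$. Since $D_{a-1}$ lies inside the defect group $D$ of each $B_i$, standard block domination yields a unique block $\bar B_i$ of $\bar N_i:=N_i/D_{a-1}$ with cyclic defect group $\bar D:=D/D_{a-1}$ of order $p^{a-1}$; the inertial index of $\bar B_i$ coincides with $e$ because $p\nmid e$. For $j\leq a-2$, the normalizer in $\bar N_{a-1}$ of the subgroup $\bar D_j:=D_j/D_{a-1}$ of $\bar D$ is $\bar N_j$, and $\bar B_j$ is the Brauer correspondent of $\bar B_{a-1}$ in $\bar N_j$. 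Inflation sends $\op{Irr}(\bar B_i)$ bijectively onto the characters of $B_i$ with $D_{a-1}$ in their kernel, preserves F-S indicators, and matches the exceptional characters of $\bar B_i$ with the exceptional characters of $B_i$ labeled by $\Lambda_1\cup\cdots\cup\Lambda_{a-1}$ (since $\lambda\in\Lambda_u$ has $D_{a-1}$ in its kernel precisely when $u\leq a-1$).

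Next I would apply the induction hypothesis to $\bar B_{a-1}$ inside $\bar N_{a-1}$, which yields a common F-S indicator for the exceptional characters of $\bar B_0,\ldots,\bar B_{a-2},\bar B_{a-1}$. Inflating, this common value is also the F-S indicator of all exceptional characters of $B_0,\ldots,B_{a-1}$ that are labeled by $\Lambda_1\cup\cdots\cup\Lambda_{a-1}$. Part (i) of Theorem \ref{T:main}, applied separately to each $B_i$, then extends the equality to the remaining exceptional characters in each $B_i$ (those labeled by $\Lambda_a$). Combining with Proposition \ref{P:Ba-1} yields the full statement.

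The main technical obstacle will be verifying the block-domination step in detail: that the block $\bar B_i$ is indeed the Brauer correspondent of $\bar B_{a-1}$ in $\bar N_i$, and that Dade's $\Lambda_u$-labeling of exceptional characters is genuinely compatible with reduction modulo $D_{a-1}$ (so that the label set $\bar\Lambda$ of $\bar B_i$ corresponds under inflation to $\Lambda_1\cup\cdots\cup\Lambda_{a-1}$). Both facts are implicit in standard treatments of cyclic defect blocks, but must be spelled out by chasing through Dade's explicit construction of the exceptional characters and the formulas for their generalized decomposition numbers displayed earlier in the paper.
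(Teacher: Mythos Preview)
Your proposal is correct and follows essentially the same approach as the paper: both argue by induction on $|D|$, pass to the quotient $N_{a-1}/D_{a-1}$ via block domination, apply the inductive hypothesis to the dominated blocks $\bar B_0,\dots,\bar B_{a-1}$, and then invoke part (i) of Theorem \ref{T:main} to propagate the common indicator to all exceptional characters in each $B_i$. You are somewhat more explicit than the paper about the role of Proposition \ref{P:Ba-1} in linking $B$ to $B_{a-1}$ and about which exceptional characters survive in the quotient (those labeled by $\Lambda_1\cup\cdots\cup\Lambda_{a-1}$), but the underlying argument is the same.
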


\begin{proof}
We prove this by induction on $|D|$. The base case $|D|=p$ holds, by Proposition \ref{P:Ba-1}. Suppose that $|D|>p$. We assume that the conclusion holds for all $p$-blocks with a cyclic defect group of order strictly less than $|D|$.

We use the bar notation for subgroups and objects associated with the quotient group $N_{a-1}/D_{a-1}$. Let $i=0,\dots,a-1$. Then $\ov N_i$ is the normalizer of $\ov D_i$ in $\ov N_{a-1}$. As $C_i$ centralizes $D_{a-1}$, Theorem 5.8.11 of \cite{NT} shows that $b_i$ dominates a unique block $\ov{b_i}$ of $\ov C_i$. Moreover $\ov b_i$ has cyclic defect group $\ov D$. Now $b_i$ has the unique irreducible Brauer character $\varphi_i$, and we can and do identify $\varphi_i$ with the unique irreducible Brauer character in $\ov b_i$. Then the inertia group of $\ov b_i$ in $\ov N_i$ is the inertia group of $\varphi_i$ in $\ov N_i$, which is $\ov{EC_i}$.

According to \cite[Section 4]{D}, there is a unique $p$-block  of $N_i$, denoted here by $\ov{B_i}$, which lies over $\ov{b_i}$. Moreover $\ov{B_i}$ has cyclic defect group $\ov{D}$. As inflation and induction of characters commute, this block is dominated by $B_i$. Now $B_i$ and $\ov{B_i}$ have the same inertial index as $|EC_i:C_i|=|\ov{EC_i}:\ov{C_i}|$. So by inflation $\op{IBr}(\ov{B_i})=\op{IBr}(B_i)$. In particular $\ov{B_i}$ is the unique block of $\ov{N_i}$ that is dominated by $B_i$. Also by inflation $\op{Irr}(\ov{B_i})\subseteq\op{Irr}(B_i)$.

As $|\ov D|<|D|$, all exceptional characters in $\ov B_0,\dots,\ov B_{a-1}$ have the same F-S indicators, by our inductive hypothesis. But the inclusion $\op{Irr}(\ov{B_i})\subseteq\op{Irr}(B_i)$ identifies the exceptional characters in $\ov B_i$ with exceptional characters in $B_i$. It now follows from part (i) of Theorem \ref{T:main} that all exceptional characters in $B_0,\dots,B_{a-1}$ have the same F-S indicators.
\end{proof}

Recall that $b_0$ has a unique irreducible Brauer character $\varphi_0$. This is the canonical character of $B$, in the sense of \cite[5.8.3]{NT}. For the next theorem, we simplify the notation of \eqref{E:bi} for the irreducible characters in $b_0$ by writing $\chi_\lambda$ in place of $X_{0,\lambda}'$, for all $\lambda\in\op{Irr}(D)$. Then according to W. Reynolds \cite[5.8.14]{NT}, for $c\in C_0$ we have
\begin{equation}\label{E:chi_lambda}
\chi_\lambda(c)=\left\{\begin{array}{ll}
                        \lambda(c_p)\varphi_0(c_{p}'),&\quad\mbox{if $c_p\in D$.}\\
                        0,&\quad\mbox{if $c_p\not\in D$.}
                       \end{array}
\right.
\end{equation}
Then $\op{Irr}(b_0)=\{\chi_\lambda\mid\lambda\in\op{Irr}(D)\}$. Notice that $\chi_1$ is the unique irreducible character in $b_0$ whose kernel contains $D$.

\begin{Theorem}\label{T:canonical}
Suppose that $B$ has a real exceptional character. Then $N_0/C_0$ has a unique subgroup $T/C_0$ of order $2$, and all exceptional characters in $B$ have F-S indicator equal to the Gow indicator $\epsilon_{T/C_0}(\chi_1)$.
\begin{proof}
Recall that $B$ has a real exceptional character if $b_0$ is real and $e$ is even, or if $b_0$ is not real and $e$ is odd. In both these cases $|N_0:C_0|$ is even. As $N_0/C_0$ is also cyclic, it has a unique subgroup $T/C_0$ of order $2$.

In view of Proposition \ref{P:B0}, we may assume that $G=N_0$. So $B=B_0$, $D$ and $C_0$ are normal subgroups of $G$ and $E$ is the stabilizer of $b_0$ in $G$. Then $\Lambda$ is a set of representatives for the orbits of $N_0$ on $\op{Irr}(D)^\times$. Set $E^*$ as the stabilizer of $\{b_0,b_0^o\}$ in $G$. Clifford correspondence defines a bijection between the irreducible characters of $E^*$ which lie over $b_0$ and the irreducible characters in $B$. This bijection preserves reality, and hence F-S indicators. So from now on we assume that $G=E^*$.

As $\chi_1$ is invariant in $E$ and $E/C_0$ is cyclic, $\chi_1$ has $e$ extensions to $E$, which we denote by $\eta_1,\dots,\eta_e$. Then $X_i:=\eta_i^G$, for $i=1,\dots,e$, give the $e$ non-exceptional characters in $B$. Moreover $X_\lambda:=\chi_\lambda^G$, for all $\lambda\in\Lambda$, give the exceptional characters in $B$.

Following Corollary \ref{C:real_exceptional}, there are three cases we must consider:

\medskip
{\bf Case 1:} $b_0$ is real, $e$ is even and $B$ has real non-exceptional characters. Then according to part (iii) of Theorem \ref{T:main} all real irreducible characters in $B$ have the same F-S indicators. We choose notation so that $X_1$ is real. As $X_1{\downarrow_T}$ is a real extension of $\chi_1$ to $T$, it follows that $\epsilon(X_1)=\epsilon(X_1{\downarrow_T})=\epsilon_{T/C_0}(\chi_1)$. This concludes Case 1.

\medskip
{\bf Case 2:} $b_0$ is real, $e$ is even but $B$ has no real non-exceptional characters. As $\chi_1$ does not extend to a real character of $E$, it does not extend to a real character of $T$, according to Lemma \ref{L:cyclic_real_extension}. So $\epsilon_{T/C_0}(\chi_1)=-\epsilon(\chi_1)$, by the definition of the Gow indicator.

Now consider the notation used in the proof of part (i) of Theorem \ref{T:main}. Here $C_i=C_0$ and $\varphi_i=\varphi_0$ and $X_{i,1}'=\chi_1$, for $i=0,\dots,a-1$. If $\lambda\in\Lambda$ then $(X_\lambda){\downarrow_{C_0}}=\sum_{\tau\in G/C_0}\chi_{\lambda^\tau}$. So $d_{X_\lambda,\varphi_i}^{(x)}=\sum_{\tau\in G/C_0}\lambda({^{\tau}}x)$, for all $x\in D^\times$. This means that $\varepsilon_0\gamma_i=1$, for $i=0,\dots,a-1$. Now in \eqref{E:i=0,...,a-1}, the term $\sigma$ is $0$, as none of $X_1,\dots,X_e$ are real. So the first equation in \eqref{E:i=0,...,a-1} simplifies here to $-\epsilon(X_\lambda)=\epsilon(\chi_1)$, for all $\lambda\in\Lambda_1$. So $\epsilon(X_\lambda)=\epsilon_{T/C_0}(\chi_1)$, for all $\lambda\in\Lambda$, by the previous paragraph and Proposition \ref{P:B0}.


\medskip
{\bf Case 3:} The final case is that $b_0$ is not real and $e$ is odd. As $B$ has an odd number $e$ of non-exceptional characters, at least one of them must be real valued. So we assume that $X_1$ is real. Then, just as in Case 1, all real irreducible characters in $B$ have the same F-S indicators.

As $|E:C_0|$ is odd and $|G:E|=2$, we have $G/C_0=E/C_0\times T/C_0$. Now $T/C_0$ conjugates $\op{Irr}(b_0)$ into $\op{Irr}(b_0^o)$. So $\chi_1$ is $T$-conjugate to $\ov\chi_1$. In particular $\chi_1{\uparrow^T}$ is irreducible and real valued. Now $X_1=(\eta_1){\uparrow^G}$ and $(\eta_1){\downarrow_{C_0}}=\chi_1$. So $(X_1){\downarrow_T}=(\chi_1){\uparrow^T}$, by Mackey's theorem.

Now from above $\epsilon(X_\lambda)=\epsilon(X_1)$, for all $\lambda\in\Lambda$. Also $\epsilon(X_1)=\epsilon((X_1){\downarrow_T})$, as both are real valued. Finally $\epsilon((X_1){\downarrow_T})=\epsilon_{T/C_0}(\chi_1)$, by the definition. This completes Case 3.
\end{proof}
\end{Theorem}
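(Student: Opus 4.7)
The plan is to first reduce to the case $G=N_0$ using Proposition \ref{P:B0}, which says the exceptional characters of $B$ and of its Brauer correspondent $B_0$ in $N_0$ share all F-S indicators. Inside $N_0$ the quotient $N_0/C_0$ embeds in $\op{Aut}(D)$, which is cyclic because $p$ is odd. I would verify $|N_0:C_0|$ is even by inspecting Corollary \ref{C:real_exceptional}: if $b_0$ is real then $E=N_0$ and $|N_0:C_0|=e$ is even; if $b_0$ is not real then $e$ is odd but $|N_0:E|=2$, so $|N_0:C_0|=2e$ is even. This yields the unique subgroup $T/C_0$ of order $2$. A further application of Clifford correspondence with respect to the extended stabilizer $E^*$ (which preserves reality and F-S indicators, as noted before Lemma \ref{L:odd_normal}) lets me assume $G=E^*$. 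Then, since $\chi_1$ is $E$-invariant and $E/C_0$ is cyclic, $\chi_1$ has $e$ extensions $\eta_1,\dots,\eta_e$ to $E$, the non-exceptional characters of $B$ are the $X_i=\eta_i\uparrow^G$, and the exceptional characters are the $X_\lambda=\chi_\lambda\uparrow^G$ for $\lambda\in\Lambda$.

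Next I would split along the three sub-cases of Corollary \ref{C:real_exceptional}. In the two sub-cases where $B$ has a real non-exceptional character, part (iii) of Theorem \ref{T:main} gives $\epsilon(X_\lambda)=\epsilon(X_1)$ for any chosen real $X_1$, and it remains to compute the latter indicator via restriction to $T$. When $b_0$ is real (so $e$ is even), $X_1\!\downarrow_T$ is itself a real extension of $\chi_1$ to $T$, so $\epsilon(X_1)=\epsilon(X_1\!\downarrow_T)=\epsilon_{T/C_0}(\chi_1)$. When $b_0$ is not real (so $e$ is odd), $T$ swaps $b_0$ with $b_0^o$, so $\chi_1$ is $T$-conjugate to $\ov{\chi_1}$, making $\chi_1\!\uparrow^T$ irreducible and real; Mackey then identifies $X_1\!\downarrow_T$ with $\chi_1\!\uparrow^T$, whose F-S indicator equals $\epsilon_{T/C_0}(\chi_1)$ by definition.

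The remaining and more delicate case is $b_0$ real, $e$ even, and $B$ without real non-exceptional characters. Here $\chi_1$ is real but no extension of $\chi_1$ to $E$ is real; Lemma \ref{L:cyclic_real_extension} then forces no extension of $\chi_1$ to $T$ to be real either, so $\epsilon_{T/C_0}(\chi_1)=-\epsilon(\chi_1)$ directly from the Gow indicator definition. To match this with $\epsilon(X_\lambda)$ I would revisit the generalized decomposition setup from the proof of part (i) of Theorem \ref{T:main}. Under the reduction above, $C_i=C_0$ and $\varphi_i=\varphi_0$ for every $i$, so Reynolds's formula \eqref{E:chi_lambda} gives $(X_\lambda)\!\downarrow_{C_0}=\sum_{\tau\in G/C_0}\chi_{\lambda^\tau}$, whence $d_{X_\lambda,\varphi_i}^{(x)}=\sum_{\tau}\lambda({}^{\tau}x)$ and therefore $\varepsilon_0\gamma_i=1$. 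Since no $X_j$ is real in this sub-case, $\sigma=0$, and the first line of \eqref{E:i=0,...,a-1} collapses to $-\epsilon(X_\lambda)=\epsilon(\chi_1)$, yielding $\epsilon(X_\lambda)=-\epsilon(\chi_1)=\epsilon_{T/C_0}(\chi_1)$, as required.

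The main obstacle is precisely this last sub-case, because reality of $\chi_1$ fails to propagate to any character of $G$ lying over it, so one cannot leverage part (iii) of Theorem \ref{T:main} through a convenient real non-exceptional character. Bridging the gap requires extracting arithmetic information from the explicit local decomposition numbers that drive the proof of part (i), combined with the dichotomy between $\epsilon(\chi_1)$ and $\epsilon_{T/C_0}(\chi_1)$ provided by Lemma \ref{L:cyclic_real_extension}.
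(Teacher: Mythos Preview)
Your proposal follows the paper's proof essentially line for line: the same reduction to $G=N_0$ via Proposition~\ref{P:B0}, then to $G=E^*$ via Clifford correspondence, the same description of the $X_i$ and $X_\lambda$, and the same three-case split with identical arguments in each case (including the use of Lemma~\ref{L:cyclic_real_extension} and the first equation of \eqref{E:i=0,...,a-1} in the delicate sub-case).

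One small slip: your justification that $|N_0:C_0|$ is even is not quite right as stated. It is not true in general that $b_0$ real forces $E=N_0$, nor that $b_0$ non-real forces $|N_0:E|=2$; there may be further $N_0$-conjugates of $b_0$ beyond $b_0$ and $b_0^o$. The correct (and equally easy) argument is that $e=|E:C_0|$ always divides $|N_0:C_0|$, so $e$ even suffices; and when $b_0$ is not real but $B$ has real exceptional characters, the extended stabilizer $E^*$ satisfies $|E^*:C_0|=2e$, which again divides $|N_0:C_0|$. This does not affect the rest of your argument.
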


Finally, we prove the application to ordinary characters as stated in the Introduction:

\begin{proof}[Proof of Theorem \ref{T:symplectic}]
Let $x$ be a weakly real $p$-element of $G$ of maximal order and set $Q:=\langle x\rangle$ and $N:=\op{N}_G(Q)$. Let $\lambda$ be a faithful linear character of $Q$. Then $N_\lambda=\op{C}_N(x)$ and $N_\lambda^*=\op{C}_N^*(x)$. So $N_\lambda^*$ does not split over $N_\lambda$. By Lemma \ref{L:odd_normal} there exists $\chi\in\op{Irr}(N\mid\lambda)$ such that $\epsilon(\chi)=-1$.

Let $\tilde B$ be the $p$-block of $N$ which contains $\chi$ and let $D$ be a defect group of $\tilde B$. Then $Q\subseteq D$ and $\op{N}_G(D)\subseteq N$. In particular $B:=\tilde B^G$ is defined and $B$ has defect group $D$. So $Q=D_i$, $N=N_i$ and $\tilde B=B_i$ for some $i\geq0$, in cyclic defect group notation.

Notice that $\lambda$ is non-trivial. So $D\not\subseteq\op{ker}(\chi)$. This means that $\chi$ is an exceptional character in $B_i$. So all exceptional characters in $B_i$, and hence also in $B$, are symplectic. The number of exceptional characters in $B$ is $\frac{|D|-1}{e}$, where $e$ is the inertial index of $B$. The number of weakly real $p$-conjugacy classes of $G$ is equal to the number of $N$-orbits on $Q^\times$, which equals $\frac{|D_i|-1}{|N_i:C_i|}$. As $|D_i|\leq|D|$ and $e\leq|N_i:C_i|$, we conclude that the number of symplectic irreducible characters of $G$ is not less than the number of weakly real $p$-conjugacy classes of $G$.
\end{proof}

\section{Acknowledgement}

H. Blau and D. Craven alerted us to examples of real $p$-blocks which do not have a real irreducible character. We thank G. Navarro for permission to include his example. We had a number of interesting discussions with R. Gow on F-S indicators.

\end{document}